\theoremstyle{plain}
\newtheorem{thm}{Theorem}[section]
\newtheorem{cor}[thm]{Corollary}
\newtheorem{prop}[thm]{Proposition}
\theoremstyle{definition}
\theoremstyle{remark}
\newtheorem{rem}[thm]{Remark}
\renewcommand{\div}{\operatorname{div}}
\begin{document}

\title{A global scheme for the incompressible Navier-Stokes equation on compact Riemannian manifolds}
\author{J\"org Kampen }
\maketitle

\begin{abstract}
 We propose a global scheme for a controlled Navier-Stokes equation system on compact smooth Riemannian manifolds. Global upper bounds for the controlled velocity function and for the control function itself are constructed which leads to global existence of solutions.  
\end{abstract}

2000 Mathematics Subject Classification. 35K40, 35Q30.
\section{Introduction}
Let $M$ be a compact smooth Riemannian manifold of dimension $n$ with line element
\begin{equation}
ds^2=g_{ij}dx^idx^j.
\end{equation}
In order to investigate the Navier-Stokes equation on manifolds the Navier-Stokes equation system on flat manifolds 
\begin{equation}\label{NavierStokes}
\left\lbrace \begin{array}{ll}
\frac{\partial\mathbf{v}}{\partial t}-\nu \Delta \mathbf{v}+ (\mathbf{v} \cdot \nabla) \mathbf{v} = - \nabla p + \mathbf{f}_{ex},\\
\\
\nabla \cdot \mathbf{v} = 0,\\
\\
\mathbf{v}(0,.)=\mathbf{h},
\end{array}\right.
\end{equation}
has to be reinterpreted. The class of flat compact manifolds where (\ref{NavierStokes}) is a correct formulation is rather limited (the classification is well-studied).  The $n$-torus $\mathbb{T}^n$ is the most natural example in the sense that this is the only type of flat compact manifolds which occurs in any dimension $n\geq 1$. In \cite{BS} it is mentioned that the Galilei symmetry of the equations on flat spaces fixes the highly constrained structure of the equation, especially the coefficient of the nonlinear convection term. As we mentioned in \cite{KHNS}, although there are rather natural generalisations of the Navier Stokes equation model on Riemannian manifolds, there is some freedom of choice concerning the description of the coupling of the velocity field to the curvature in such cases. Locally, and in a suitable chart on $M$ the equation on the manifold $M$ may look as in (\ref{NavierStokes}), but it seems that no natural law is known which determines how such local equation systems have to be 'glued' in order to get a global equation, although there may be 'natural choices' from a mathematical point of view. 
The following considerations are not a derivation of the Navier Stokes equation on manifolds - this is well-known and related considerations may be found in \cite{MT}. The purpose of the following considerations is tautological from this point of view. Their purpose is the preparation of a solution scheme, which will be formulated explicitly in the second section.   
Well, in our context of Riemannian manifolds in this paper it is natural to replace the nonlinear term by the covariant derivative of the vector field with respect to itself, i.e. 
\begin{equation}
(\mathbf{v} \cdot \nabla) \mathbf{v}~\mbox{ is replaced naturally by } \nabla_{\mathbf{v}}\mathbf{v},
\end{equation}
where the latter symbol denotes the covariant derivative in standard invariant notation.
That this requirement is consistent with the formulation on flat manifolds can be seen easily
by writing the covariant derivative in coordinates. For the $j$th component we have (we use  Einstein notation in the following if convenient)
\begin{equation}
\left( \nabla_{\mathbf{v}}\mathbf{v}\right)_j=\sum_{k=1}^n v^j_{;k}v^k e_j=\sum_{k=1}^n\left( v^j_{,k}+\sum_{m=1}^nv^m\Gamma^j_{mk}\right)v^ke_j 
\end{equation}
where $e_j$ denotes the $j$th unit vector of the Euclidean basis. The Christoffel symbols $\Gamma^j_{mk}$ become zero if the manifold is flat. Hence taking the covariant derivative is a quite natural extension which collapses to the classical equation term for the $n$-torus. Recall that
\begin{equation}
\Gamma^l_{ij}=\frac{1}{2}g^{kl}
\left( g_{jk,i}+g_{ik,j}+g_{ij,k}\right), 
\end{equation}
where $\left( g^{kl}\right) $ denotes the inverse of $\left(g_{ij}\right)$.
Another matter is the reinterpretation of the Laplacian. We can define it in terms of the covariant derivative and its adjoint or in terms of the exterior derivative and its adjoint. The former possibility leads to the Bochner Laplacian
\begin{equation}
L_B\equiv-\Delta^*\Delta
\end{equation}
where 
\begin{equation}
\Delta^*:C^{\infty}\left(M,T^*\otimes T \right)\rightarrow C^{\infty}\left(M,T \right)  
\end{equation}
denotes the adjoint of the connection $\Delta$  on the tangent bundle on $M$ and $T\equiv TM$ (resp. $T^*\equiv T^*M$) denotes the tangent bundle (resp. cotangent bundle). This $\Delta$ is not to be confused with the usual Laplacian of course, but since we do not use this notation in the following there should be no peril of confusion). Another possibility is called the Hodge Laplacian which is defined by
\begin{equation}
L_H\equiv -\left(d^*d+dd^* \right),
\end{equation}
where $d^*$ denotes the adjoint. Note that $L_H$ is related to a  Dirac operator $D$. Indeed from index theory we know that the difference of $D^2$ and the Bochner-Laplacian (applied to some vector field $\mathbf{v}$) is given in terms of the curvature tensor of the connection on the tangent bundle (applied to $\mathbf{v}$). The incompressibility condition usually leads to a simplification of the interpretation of the Laplacian, and in many cases it seems reasonable to interpret it by the Bochner-Laplacian plus the Ricci-tensor applied to the velocity field. In any case we may subsume a lot of possibilities by assuming that we may substitute the Laplacian by a linear scalar second order diffusion operator $L$ which has local coordinates
\begin{equation}\label{LU}
L_U\mathbf{v}\equiv \sum_{j,k=1}^na_{jk}(x)\frac{\partial^2 v_i}{\partial x_j\partial x_k}+\sum_{k=1}^nb_k(x)\frac{\partial v_i}{\partial x_k} ,
\end{equation}
where the subscript $U$ indicates that the operator is looked at with respect to a chart defined on $U\subset {\mathbb R}^n$. Well, the second order coefficients may also depend on a third index $i$, but let us keep things simple, since this is not essential, because we have no coupling of the second order terms in the Navier-Stokes equation. We may impose uniform ellipticity conditions on the second order terms, postponing possible generalizations involving a H\"{o}rmander condition to subsequent investigations. All these local operators $L_U$ together with an atlas on $M$ define the global operator $L$ on $M$ which may be interpreted as a Hodge-Laplacian or Bochner-Laplacian in specific circumstances. In any case we assume that the operator in (\ref{LU}) is uniform elliptic with bounded smooth coefficients. Finally, we need to reinterpret the incompressibility condition on a manifold and recall the meaning of a gradient on a riemannian manifold. Well, in Einstein notation (and with Einstein summation) this is just
\begin{equation}
\div \mathbf{v}:=v^j_{;j}.
\end{equation}
In local coordinates the divergence may be expressed by 
\begin{equation}
\sum_{i=1}^n\frac{\partial v^i}{\partial x_i}+\sum_{k,i=1}^n v^k\Gamma^i_{ki},
\end{equation}
where in Einstein notation the symbols for sums are suppressed for all indexes which appear 'above' and 'below'. Concerning the gradient of the pressure on a Riemannian manifold we recall that
\begin{equation}
\nabla_Mp=\left(g^{ij}\frac{\partial p}{\partial x_i}\right)_{1\leq j\leq n} 
\end{equation}
The considerations so far lead us to the conclusion that we may define the Cauchy problem for the Navier-Stokes equation on manifolds by the equation system
\begin{equation}\label{NavierStokesman}
\left\lbrace \begin{array}{ll}
\frac{\partial\mathbf{v}}{\partial t}-\nu L \mathbf{v}+ \nabla_{\mathbf{v} } \mathbf{v} = - \nabla_{\tiny M} p ,\\
\\
\div\mathbf{v} = 0,\\
\\
\mathbf{v}(0,.)=\mathbf{h}.
\end{array}\right.
\end{equation}
Locally, we can work this out as in Euclidean space, i.e. we may look at local coordinates in a certain chart with values in $U$ where the Cristoffel symbols disappear and the coefficients $g_{ij}$ of the line element satisfy $g_{ij}=\delta_{ij}$ locally. We have to transform the coefficients $a_{ij}$ and $b_i$ of the operator $L$ accordingly. We call these coefficient functions 'locally flat on $U$' and, keeping our general notation, we just write
\begin{equation}\label{locflat}
a_{jk}^{f,U}, b_k^{f,U}
\end{equation}
in order to indicate that we consider the coefficient functions in a chart where the metric is Euclidean and the affine connections involved collapse to a directional derivative. Then locally on $U$ everything looks as in the Euclidean space, except that globally -as $U$ varies -we have variable coefficients of first and second order, i.e., we have for all $1\leq i\leq n$
\begin{equation}\label{eqiloc}
\begin{array}{ll}
\frac{\partial v^i}{\partial t}-\sum_{j,k=1}^na^{f,U}_{jk}\frac{\partial^2 v^i}{\partial x_j\partial x_k}-\sum_{k=1}^nb^{f,U}_k\frac{\partial v^i}{\partial x_k}\\
\\
+\sum_{k=1}^nv^i_{,k}v^k=-p_{,i}.
\end{array}
\end{equation}
In any natural interpretation of the incompressible Navier Stokes equation on Riemannian manifolds the coefficients in (\ref{locflat}) are locally constant in locally flat coordinates. Obviously, locally flat coordinates are useful in order to eliminate the pressure from the equation. (We shall make use of nonzero Christoffel functions below.)
Derivation for each $i$ of equation (\ref{eqiloc}) with respect to $x_i$ and summing up using incompressibility leads to a Laplacian equation on $U$ in the form
\begin{equation}
-\Delta p=S\left( \mathbf{ v},\nabla \mathbf{v}\right)_U,
\end{equation}
where $S^{f,U}\left( \mathbf{ v},\nabla \mathbf{v}\right)_U:U\rightarrow {\mathbb R}$ is a function defined by 
\begin{equation}\label{sfu}
\begin{array}{ll}
S^{f,U}\left( \mathbf{ v},\nabla \mathbf{v}\right)_U(x):=-\sum_{i=1}^n\sum_{j,k=1}^na^{f,U}_{jk,i}\frac{\partial^2 v^i}{\partial x_j\partial x_k}-\sum_{i=1}^n\sum_{k=1}^nb^{f,U}_{k,i}\frac{\partial v^i}{\partial x_k}\\
\\
+\sum_{i,k=1}^nv^i_{,k}v^k_{,i}.
\end{array}
\end{equation}
Again, the superscripts $f$ and $U$ indicate that we look at the functional $S$ in a locally flat metric on a local domain $U$. Therefore for the first order spatial derivatives of the coefficients we have $a^{f,U}_{jk,i}(x)=0$ and $b^{f,U}_{k,i}(x)=0$ for usual interpretations of the Laplacian on the manifold $M$. In this case the equation (\ref{sfu}) collapses to the usual Leray elimination of the pressure. This is especially true in the case of flat manifolds such as the $n$-torus.  
\begin{rem}
For variable second order terms without coupling we can compute the Leray projection as in the classical model: for each $1\leq j,k\leq n$ we sum up terms
\begin{equation}
\begin{array}{ll}
\sum_{i=1}^n\frac{\partial}{\partial x_i}a_{jk}v_{,j,k}=\sum_{i=1}^na_{jk,i}v_{,j,k}\\
\\
+a_{jk}\frac{\partial^2}{\partial x_j\partial x_k}\left( \sum_{i=1}^nv_{,i}\right)=\sum_{i=1}^na_{jk,i}v_{,j,k}
\end{array}
\end{equation}
For this reason more flexibility with respect to the second order terms as indicated in (\ref{sfu}) means that we can apply the scheme to models of incompressible Navier Stokes equations on manifolds with variable viscosity.
\end{rem}

Next we have to discuss this local Poisson equation on $U\subset {\mathbb R}^n$. Actually, in terms of a given vector field $\mathbf{v}$ we have a Dirichlet problem here of the form
\begin{equation}
\begin{array}{ll}
\Delta p(t,.)=f \mbox{ on $U$},\\
\\
p(t,x)=g \mbox{ on $\partial U$},
\end{array}
\end{equation}
where $\partial U$ is the boundary of $U$. Note that we have a family of Dirichlet problems here (one for each time $t\geq 0$), where $t$ serves as an external parameter. In our case $f$ and $g$ are defined in terms of the same function $S\left( \mathbf{ v},\nabla \mathbf{v}\right)$, where for fixed $t$ and $x\in U$
\begin{equation}
f(x):=-S^{f,U}\left( \mathbf{ v},\nabla \mathbf{v}\right)(t,x),
\end{equation}
and
\begin{equation}
g(x):=-S^{f,\partial U}\left( \mathbf{ v},\nabla \mathbf{v}\right)(t,x).
\end{equation}
Here, we may assume that domain $U$ is small enough such that the definition of $S^{f,U}$ in flat coordinates can be extended to the boundary of $\partial U$ of $U$. Another possibility is 
to define $S^{f,\partial U}\left( \mathbf{ v},\nabla \mathbf{v}\right)(t,x)$ with respect to a finite set of charts with image $V_i,~i\in I$ such that  $\partial U\subset \cup_{i\in I} V_i$ such that for fixed $t$ and $x\in \partial U\cap V_i$ we have
\begin{equation}
S^{f,\partial U\cap V_i}\left( \mathbf{ v},\nabla \mathbf{v}\right)(t,x)=S^{V_i}\left( \mathbf{ v},\nabla \mathbf{v}\right)_{ V_i}(t,x),
\end{equation}
where the charts are identical on $U\cap V_i$ for all $i\in I$. Here and in general if we consider the functional $S$ in a certain unspecified chart on $U\subset {\mathbb R}^n$ then we just write $S^U$ to indicate the functional $S$ in local coordinates at $U$. 
We may assume that the atlas for the smooth compact manifold $M$ is chosen such that each $U_i$ has a $C^{\infty}$-boundary $S$. As it is well known the solution for the pressure $p$ on the domain $U$ can be obtain by the sum of two solutions $p_1$ and $p_2$ of two Dirichlet problems with zero boundary condition for $p_1$ and zero interior condition for $p_2$ such that
\begin{equation}
p=p_1+p_2
\end{equation}
where both functions are given in terms of the Green's function $G^f_U$ (here again the superscript $f$ indicates with respect to locally flat coordinates), i.e.,
\begin{equation}
p_1=\int_Uf(y)G^f_U(x,y)dy,
\end{equation}
and 
\begin{equation}
p_2=\int_{\partial U}g(y)\partial_{\nu_y}G^f_U(x,y)d\sigma (y),
\end{equation}
where $\partial U$ denotes the boundary of $U$ and $\partial_{\nu_y}$ denotes the derivative with respect to the outward normal.
Hence locally on $U$ and with respect to locally flat coordinates we have a Leray representation of a Navier-Stokes equation of the form 
\begin{equation}\label{eqiloc1}
\begin{array}{ll}
\frac{\partial v^i}{\partial t}-\sum_{j,k=1}^na^{f,U}_{jk}(x)\frac{\partial^2 v^i}{\partial x_j\partial x_k}-\sum_{k=1}^nb^{f,U}_k(x)\frac{\partial v^i}{\partial x_k}\\
\\
+\sum_{k=1}^nv^i_{,k}v^k=\\
\\
\int_{U}\frac{\partial}{\partial x_i}G^f_U(x,y)S^{f,U}\left( \mathbf{ v},\nabla \mathbf{v}\right)(t,y)dy
+\int_{\partial U}\partial_{\nu_y}G^f_U(x,y)S^{f,U}\left( \mathbf{ v},\nabla \mathbf{v}\right)(t,y)dy,
\end{array}
\end{equation}
which can be used in order to construct global solutions of the original Navier Stokes equation on Riemannian manifolds. Recall: superscript $f$ for 'flat coordinates' on $U$, i.e., we assume that $U$ is small enough to allow for flat coordinates w.l.o.g.. Note that in (\ref{eqiloc1}) we do not impose any explicit boundary conditions on the velocity functions $v^i$. The only boundary conditions which carry infomation from other local equations are via the Poisson pressure equation and its solution with the Green function above. The reason is that this is sufficient in order to define a global scheme which converges to a global solution of the incompressible Navier Stokes equation on manifolds.
In general local coordinates we get the same equation in the form
\begin{equation}\label{navlerayeqilocU}
\begin{array}{ll}
\frac{\partial v^i}{\partial t}-\sum_{j,k=1}^na^{U}_{jk}(x)\frac{\partial^2 v^i}{\partial x_j\partial x_k}-\sum_{k=1}^nb^{U}_k(x)\frac{\partial v^i}{\partial x_k}\\
\\
+\sum_{k=1}^n\left( v^i_{,k}+\sum_{m=1}^nv^m\Gamma^{i,U}_{mk}\right)v^k=\\
\\
\int_{U}\frac{\partial}{\partial x_i}G_U(x,y)S^{U}\left( \mathbf{ v},\nabla \mathbf{v}\right)(t,y)dy
+\int_{\partial U}\partial_{\nu_y}G_U(x,y)S^{U}\left( \mathbf{ v},\nabla \mathbf{v}\right)(t,y)dy,
\end{array}
\end{equation}
where the superscript $U$ to the coefficients $a_{ij}$, $b_i$, and $\Gamma^i_{jk}$ indicate that this is the local representation in a unspecified chart $\psi: U_M\subset M\rightarrow U$. Note that in (\ref{navlerayeqilocU}) we dropped the superscript $f$ (indicating locally flat coordinates). All functions without the superscript $f$ (such as the Green function $G_U$) are defined via the coordinate transformation and the corresponding function in flat coordinates (such as $G^f_U$), i.e., all the related terms are considered to be defined in terms of the terms with superscript $f$ via coordinate transformation (this defines $a_{ij}^{U}$ $S^U$ etc.). Now if we take a finite atlas of the smooth compact manifold $M$ of the form $A:=\left\lbrace \psi_m:U_{M_m}\rightarrow U_m,~m\in I \right\rbrace $ (where we assume that each $U_m$ has a $C^{\infty}$-boundary such that the Green's function $G_{U_m}$ exists, then we obtain a 'representation' of the Navier-Stokes equation in Leray projection form on $M$ by a family of local equations each of which is of  the form (\ref{navlerayeqilocU}). Speaking more strictly. These local equations 'represent' the Navier Stokes equation on a manifold, if they all fit together not only the pressure). However, we shall see that we can define a scheme based on local equations of the form (\ref{navlerayeqilocU}) which lead to a global solution of the Navier Stokes equation on manifolds, i.e., the local equations and the pressure related source terms carray enough information in order to determine global solutions of the Navier-Stokes equation on manifold - and in this sense they 'represent' the Navier Stokes equation on Riemannian manifolds.  Note that except for the term
\begin{equation}\label{comm}
\int_{\partial U}\partial_{\nu_y}G_U(x,y)S\left( \mathbf{ v},\nabla \mathbf{v}\right)(t,y)dy
\end{equation}
all terms in (\ref{navlerayeqilocU}) are local in the sense that they are treated independently of related local terms of other equations. Indeed the term (\ref{comm}) describes the communication for a system of local equations that we are going to define.

Next we derive this global system of local equations of the form (\ref{navlerayeqilocU}) using a partition of unity. Using this partition of unity we can derive localized equations where the flow of information over the boundary (the coupling of the local equations) is based on the terms (\ref{comm}). First, let us recall some facts concerning partitions of unity on smooth manifolds. We have
\begin{prop}
Let $M$ be a compact manifold  and let $(W_i)_{i\in I}$ be an open cover of $M$. Then there exists an open cover $(V_j)_{j\in J}$ subordinate to $(W_i)_{i\in I}$ and a family of real smooth functions $(\phi_j)_{j\in J}$, such that
\begin{itemize}
 \item[i)]$\phi_j:M\rightarrow {\mathbb R}$ , where $\mbox{supp}(\phi_j)\subset V_j$, and
 \item[ii)] $\phi_j(x)\geq 0$ for all $x\in V_j$, and for all $x\in M$
 \begin{equation}
 \sum_{j\in J}\phi_j(x)=1
 \end{equation}
\end{itemize}
\end{prop}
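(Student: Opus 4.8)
The plan is to reduce this standard statement to two ingredients: the classical construction of smooth bump functions on $\mathbb{R}^n$, and the compactness of $M$. First I would fix, once and for all, a smooth function $\beta:\mathbb{R}^n\to\mathbb{R}$ with $0\le\beta\le 1$, with $\beta\equiv 1$ on the closed unit ball and $\beta\equiv 0$ outside the ball of radius $2$. Such a $\beta$ is obtained in the usual way from the non-analytic smooth function $t\mapsto e^{-1/t}$ for $t>0$ (extended by $0$ for $t\le 0$) by forming an appropriate quotient of translates and composing with $x\mapsto|x|^2$. This is where all the genuine analysis of the argument sits; the rest is bookkeeping with charts and compactness.

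Next, for each point $x\in M$ I would pick an index $i(x)\in I$ with $x\in W_{i(x)}$ and a coordinate chart $\kappa_x:\Omega_x\to\mathbb{R}^n$ with $x\in\Omega_x$, $\overline{\Omega_x}\subset W_{i(x)}$ compact, $\kappa_x(x)=0$, and $\kappa_x(\Omega_x)$ containing the ball of radius $2$; shrinking $\Omega_x$ if necessary, this is always possible because $M$ is a manifold. Setting $\psi_x:=\beta\circ\kappa_x$ on $\Omega_x$ and $\psi_x:=0$ on $M\setminus\Omega_x$ produces a smooth function on $M$ with $\psi_x(x)=1$, $\psi_x\ge 0$ everywhere, and $\mathrm{supp}(\psi_x)\subset\Omega_x\subset W_{i(x)}$. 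The open sets $\{\,\psi_x>0\,\}$ cover $M$, so by compactness there are finitely many points $x_1,\dots,x_N$ with $M=\bigcup_{j=1}^N\{\,\psi_{x_j}>0\,\}$; put $J:=\{1,\dots,N\}$.

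Finally I would set $\Phi:=\sum_{j=1}^N\psi_{x_j}$, which is smooth on $M$ and strictly positive by the covering property, hence bounded below by a positive constant on the compact $M$, so $1/\Phi$ is smooth as well. Define $\phi_j:=\psi_{x_j}/\Phi$ and $V_j:=\Omega_{x_j}$; then $(V_j)_{j\in J}$ is an open cover subordinate to $(W_i)_{i\in I}$ (indeed $\overline{V_j}\subset W_{i(x_j)}$), each $\phi_j:M\to\mathbb{R}$ is smooth with $\mathrm{supp}(\phi_j)=\mathrm{supp}(\psi_{x_j})\subset V_j$ and $\phi_j\ge 0$, and $\sum_{j\in J}\phi_j=\Phi/\Phi\equiv 1$, which gives i) and ii). The one point needing care is the choice in the second step: one must shrink each coordinate domain enough that its closure lies inside the prescribed $W_{i(x)}$ — so that the resulting refinement is genuinely subordinate — while still leaving room to pull back $\beta$. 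This is exactly where the main (modest) obstacle lies; once it is handled, compactness makes $J$ finite and no convergence question arises for the sum.
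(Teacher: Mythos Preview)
Your proof is correct and follows the standard textbook construction: smooth bump functions pulled back through charts, a compactness argument to extract a finite subcover, and normalization by the strictly positive sum. The paper itself does not prove this proposition at all; it is simply quoted as a well-known fact (followed by a remark that $J$ may be taken finite by compactness), so there is nothing to compare against --- your argument supplies exactly the classical proof that the paper omits.
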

\begin{rem}
 Since $M$ is compact we may assume that the cover $(V_j)_{j\in J}$ is finite, i.e., we may assume that the index set $J$ is of finite cardinality.
\end{rem}
\begin{rem}
The above result may be generalized to manifolds which are countably unions of compact manifolds. In this paper we restrict our considerations to compact manifolds. This simplifies some steps of the proof of global existence which are more complicated in the context of non-compact manifolds.
\end{rem}
We apply the latter proposition and consider a partition of unity $(\phi_j)_{j\in J}$ subordinate to $(V_j)_{j\in J}$ where we may assume that the latter cover is such that we have a description of the Navier-Stokes equation in locally flat coordinates on each $V_j$, i.e., for each $V_j$ we have a chart $\psi_j:V_j\rightarrow \psi_j(V_j)=:U_j\subset {\mathbb R}^n$, and such that the equation takes the form (\ref{eqiloc}), or, equivalently, the form (\ref{navlerayeqilocU}) in the chosen coordinates. We shall define a system for the family $\left( v^{ij}\right)_{1\leq i\leq n,~j\in J}$, where $v^{ij}$ denotes the restriction of $v^{i}$ to the domain $U_j$ in a chart $\psi_j$. Note that for all $j\in J$ we have in local coordinates (on $U_j$)
\begin{equation}
v^{ij}(t,x)=\sum_{k\in J_j}\phi^U_{jk}(x)v^i(t,x)
\end{equation}
along with
\begin{equation}\label{Jj}
J_{j}:=\left\lbrace  k\in J|U_j\cap U_k\neq \oslash\right\rbrace ,
\end{equation}
and where for all $j,k\in J$ we define
\begin{equation}
\begin{array}{ll}
\phi_{jk}^U:\overline{U_j}\rightarrow {\mathbb R}\\
\\
\phi_{jk}^U(z)=\left\lbrace \begin{array}{ll}
\phi_k\left(\psi^{-1}_j(z)\right)~~\mbox{if}~~\psi^{-1}_j(z)\in V_k\\
\\
0~~\mbox{else}.
\end{array}\right.
\end{array}
\end{equation}
The solution of the system for the family of functions  $\left(v^{ij}\right)_{1\leq i\leq n,j\in J}$ provides enough information for reconstruction of the solution $\mathbf{v}$ of the Navier-Stokes equation on a manifold. We have a little redundancy in that family of function, but we may define the family $\left( v^{ijj}\right)_{1\leq i\leq n,j\in J}$ along with
\begin{equation}
\begin{array}{ll}
v^{ijj}:[0,\infty)\times U_j\rightarrow {\mathbb R}\\
\\
v^{ijj}(t,x)=\phi^{U}_{jj}v^{ij}(t,x)
\end{array}
\end{equation}
in order to define a family of functions where local sums represent a solution of the Navier Stokes equation in a local chart.
The latter family, as derived from the family $v^{ij}_{1\leq i\leq n,j\in J}$,  may be considered as a representation in local coordinates of the solution of the incompressible Navier-Stokes equation on a compact Riemannian manifold subordinated to a finite atlas which allows for locally flat representations.
In locally flat coordinates we have the equation
\begin{equation}\label{eqiloc1}
\begin{array}{ll}
\frac{\partial v^{ij}}{\partial t}-\sum_{j,k=1}^na^{f,U}_{jk}(x)\frac{\partial^2 v^{ij}}{\partial x_j\partial x_k}-\sum_{k=1}^nb^{f,U}_k(x)\frac{\partial v^{ij}}{\partial x_k}\\
\\
+\sum_{k=1}^nv^{ij}_{,k}v^{kj}=S^{j}_{\mbox{int},i}\left( \mathbf{ v},\nabla \mathbf{v}\right)+S^{J_j}_{\mbox{coup},i}\left( \mathbf{ v},\nabla \mathbf{v}\right),
\end{array}
\end{equation}
where
\begin{equation}
S^{j}_{\mbox{int},i}\left( \mathbf{ v},\nabla \mathbf{v}\right)=\int_{U_j}\frac{\partial}{\partial x_i}G^f_{U_j}(x,y)S^{j,U}_i\left( \mathbf{ v},\nabla \mathbf{v}\right)(t,y)dy,
\end{equation}
along with
\begin{equation}\label{sfujj}
\begin{array}{ll}
S^{j,U}_i\left( \mathbf{ v},\nabla \mathbf{v}\right)(t,x):=-\sum_{i=1}^n\sum_{m,k=1}^na^{f,U}_{mk,i}(x)\frac{\partial^2 v^{ij}}{\partial x_m\partial x_k}\\
\\
-\sum_{i=1}^n\sum_{k=1}^nb^{f,U}_{k,i}(x)\frac{\partial v^{ij}}{\partial x_k}
+\sum_{i,k=1}^nv^{ij}_{,k}v^{kj}_{,i},
\end{array}
\end{equation}
and the coupling term $S^{J_j}_{\mbox{coup},i}\left( \mathbf{ v},\nabla \mathbf{v}\right)$ is defined below. Recall that in a usual interpretation of the Laplacian on manifolds in locally flat coordinates the spatial derivative of the second order and first order coefficients $a^{f,U}_{jk,i}$ and $b^{f,U}_{k,i}$ are zero and the definition in (\ref{sfujj}) simplifies accordingly. However, tracking them has the convenience that we can easily retrieve the information how the solution may look like in general coordinates. Moreover, we cannot stick to the same locally flat coordinates for $v^{ij}$ if we consider the functions $v^{ip}$ on the adjacent domains $U_p$ with $p\in J_j\setminus \{j\}$ in general. It is natural to consider them in natural local coordinates, i.e. such that the line elements looks locally like $ds^2=\sum_{mp}g_{mp}dx_mdx_p$. Then we have to substitute usual derivatives by affine connections. However, on the boundary of $\partial U_j$ we may consider locally flat coordinates (w.l.o.g. we may assume that the partition of unity is fine enough to have flat coordinates in a small neighborhood of each coordinate patch $U_j$).
Next we observe that 
 \begin{equation}
 v^{ijj}=\phi^{U}_{jj}v^{ij}
 \end{equation}
has zero values on the boundary $\partial U_j$. Accordingly, in order to define $S^{J_j}_{\mbox{coup},i}$ (which denote coupling terms) we consider for $p\in J_j\setminus \left\lbrace  j\right\rbrace$ and $x\in U_p\cap \partial U_j$
\begin{equation}\label{sfujp}
\begin{array}{ll}
S^{f,jp,U}_i\left( \mathbf{ v},\nabla \mathbf{v}\right)(t,x):=-\sum_{i=1}^n\sum_{m,k=1}^na^{f,U_p\cap \partial U_j}_{mk,i}(x)\frac{\partial^2 v^{ijp}}{\partial x_m\partial x_k}\\
\\
-\sum_{i=1}^n\sum_{k=1}^nb^{f,U_p\cap \partial U_j}_{k,i}(x)\frac{\partial v^{ijp}}{\partial x_k}
+\sum_{i,k=1}^nv^{ijp}_{,k}v^k_{,i},
\end{array}
\end{equation}
where the notation $a^{f,U_p\cap \partial U_j}_{mk}$ and $b^{f,U_p\cap \partial U_j}_{k}$ indicates that these are coefficients which are flat in $U_j$-coordinates (but not necessarily flat in coordinates which are flat in $U_p$). Hence, in a usual interpretation of the Laplacian on a manifold in locally flat coordinates they are locally constant and their partial derivatives are zero. However, denoting these partial derivatives in (\ref{sfujp}) accounts for a more general situation (possible generalisation) and reminds us how the term may be rewritten in general coordinates. 
Next we have
\begin{equation}
\begin{array}{ll}
S^{J_j}_{\mbox{coup},i}\left( \mathbf{ v},\nabla \mathbf{v}\right)=\\
\\
\sum_{p\in J_j\setminus \left\lbrace j\right\rbrace }\int_{\partial U}\partial_{\nu_y}G_U(x,y)S^{f,jp,U}\left( \mathbf{ v},\nabla \mathbf{v}\right)(t,y)dy.
\end{array}
\end{equation} 
Again, we emphasize that the boundary terms for $j=p$ can be eliminated
  since $\phi^U_{jj}\mathbf{v}^j$ becomes zero on the boundary of $U_j$.
Now we have a system of $\mbox{card}(J)$ equations of form (\ref{eqiloc1}) where  equation number $j$ is coupled with equation number $k$ if $k\in J_j$. 
\begin{rem}
The system in (\ref{eqiloc1}) is not intended as a definition of the Navier Stokes equation on manifolds. It is a tautological description of a bunch of local equations which is satisfied by the solution of the Navier Stokes equation on manifolds. From this description we shall derive a local scheme and a global scheme in order to obtain a local and a global solution of the Navier Stokes equation. Furthermore, from  (\ref{eqiloc1}) we can observe that there is some freedom for modelling the Navier Stokes equation on manifolds, as no physical law is known which describes how the local equation have to communicate. Symmetry assumptions of space are needed to determine this.
\end{rem}

In general coordinates we write $S^{jp,U}_i=S^{f,jp,U}$, i.e., we drop the superscript $f$.
In this representation the derivatives if the coefficient functions across the boundary $\partial U_j$ are no longer zero in general. Indeed, they describe some part of the flow of informations or the coupling between the local Navier Stokes equations for $v^{ij}$.

This leads to the idea that we may define a time-local scheme by solving equations for functions $v^{ij,m}$ which approximate $v^{ij}$ for $1\leq i\leq n$ and $j\in J$,  and where we take the coupling information from the previous iteration step $m-1$. More precisely, for $j\in J$, $1\leq i\leq n$, and $m\geq 1$  define recursively  
\begin{equation}\label{eqiloc1m}
\begin{array}{ll}
\frac{\partial v^{ij,m}}{\partial t}-\sum_{j,k=1}^na^{f,U}_{jk}(x)\frac{\partial^2 v^{ij,m}}{\partial x_j\partial x_k}-\sum_{k=1}^nb^{f,U}_k(x)\frac{\partial v^{ij,m}}{\partial x_k}\\
\\
+\sum_{k=1}^nv^{ij,m}_{,k}v^{kj,m}\\
\\
=S^{j,m}_{\mbox{int},i}\left( \mathbf{ v},\nabla \mathbf{v}\right)+S^{J_j,m-1}_{\mbox{coup}}\left( \mathbf{ v},\nabla \mathbf{v}\right),
\end{array}
\end{equation}
where
\begin{equation}
S^{j,m}_{\mbox{int},i}\left( \mathbf{ v},\nabla \mathbf{v}\right)=\int_{U_j}\frac{\partial}{\partial x_i}G^f_{U_j}(x,y)S^{j,m,U}_i\left( \mathbf{ v},\nabla \mathbf{v}\right)(t,y)dy,
\end{equation}
along with
\begin{equation}\label{sfujjm}
\begin{array}{ll}
S^{j,m,U}_i\left( \mathbf{ v},\nabla \mathbf{v}\right)(t,x):=-\sum_{i=1}^n\sum_{j,k=1}^na^{f,U}_{mk,i}(x)\frac{\partial^2 v^{ij,m}}{\partial x_j\partial x_k}\\
\\
-\sum_{i=1}^n\sum_{k=1}^nb^{f,U}_{k,i}(x)\frac{\partial v^{ij,m}}{\partial x_k}
+\sum_{i,k=1}^nv^{ij,m}_{,k}v^{kj,m}_{,i},
\end{array}
\end{equation}
and
\begin{equation}
\begin{array}{ll}
S^{J_j,m-1}_{\mbox{coup},i}\left( \mathbf{ v},\nabla \mathbf{v}\right)=\\
\\
\sum_{p\in J_j\setminus \left\lbrace j\right\rbrace }\int_{\partial U}\partial_{\nu_y}G_U(x,y)S^{f,jp,m-1,U}\left( \mathbf{ v},\nabla \mathbf{v}\right)(t,y)dy,
\end{array}
\end{equation}
along with
\begin{equation}\label{sfujpm}
\begin{array}{ll}
S^{f,jp,m-1,U}_i\left( \mathbf{ v},\nabla \mathbf{v}\right)(t,x):=-\sum_{i=1}^n\sum_{q,k=1}^na^{f,U_p}_{qk,i}(x)\frac{\partial^2 v^{ij,m-1}}{\partial x_q\partial x_k}\\
\\
-\sum_{i=1}^n\sum_{k=1}^nb^{f,U_p}_{k,i}(x)\frac{\partial v^{ij,m-1}}{\partial x_k}
+\sum_{i,k=1}^nv^{ij,m-1}_{,k}v^{kj,m-1}_{,i}.
\end{array}
\end{equation}

We may start this local scheme at some time point $t=t_0$ with $\mathbf{v}(t_0,.)=\left(v^{1}(t_0,.),\cdots,v^n(t_0,.) \right) $, if this function is known. Furthermore, we may (but latter we shall see that we do not need to) introduce boundary conditions in order to make this time-local scheme spatially global, i.e. for some $t_1>t_0$, and for all $j\in J$ we add the boundary condition
\begin{equation}\label{boundt}
v^{ij,m}(t,x)=\sum_{k\in J_j}v^{ijk,m-1}(t,x)\mbox{ for all }(t,x)\in [t_0,t_1]\times \partial U_j.
\end{equation}
For $m=1$ we may set $v^{ij,0}(t,x):=v^{i}(t_0,x)\mbox{ for all }(t,x)\in [t_0,t_1]\times \partial U_j$.  If $t=0$ and $m=1$ , then we define for $1\leq i\leq n$ and $j\in J$
\begin{equation}\label{eqiloc2m}
v^{ij,m-1}|_{m=1}=v^{ij,0}:=h^{ij}=h^i|_{U_j}.
\end{equation} 
For a horizon $t_1-t_0$ small enough we shall see that this scheme leads to a time-local fixed point iteration in classical space which defines in the limit $m\uparrow \infty$ a spatially global and time-local solution
\begin{equation}
\mathbf{v}:[t_0,t_1]\times M\rightarrow TM
\end{equation}
of the incompressible Navier-Stokes equation on the manifold $M$. The tangential bundle is local isomorphic to $U_i\times {\mathbb R}^n$, and we shall understand that we use at each $x\in {\mathbb R}^n$ the isomorphism
\begin{equation}\label{isom}
T_xM\cong{\mathbb R}^n,
\end{equation}
if we refer to the $n$ components of the velocity functions. However, in general we shall consider the velocity functions in charts, and then components of velocity functions are natural.
In a first step of our proof of global regular solutions we shall show that a reformulation of the scheme defined by the equations (\ref{eqiloc1m}), and (\ref{eqiloc2m}) and with boundary conditions (\ref{boundt}) converges to the local solution of the Navier tokes equation which is valid for small time $t>0$. Note that the local right side of the Poisson equation which eliminates the pressure involves terms  of the from $\sum_{i,k=1}^nv^i_{,k}v^k_{,i}$ (in locally flat coordinates), where the difference of two consecutive iteration steps (indexed by $m$) involves localisations of
\begin{equation}
\begin{array}{ll}
\sum_{i,k=1}^nv^{ij,m-1}_{,k}v^{kj,m-1}_{,i}-\sum_{i,k=1}^nv^{ij,m-2}_{,k}v^{kj,m-2}_{,i}\\
\\
=\sum_{i,k=1}^n\left( v^{ij,m-1}_{,k}+v^{kj,m-2}_{,k}\right)\delta v^{ij,m-1}_{,k}, 
\end{array}
\end{equation}
along with $\delta v^{ij,m-1}_{,k}:=v^{kj,m-1}_{,i}-v^{kj,m-2}_{,i}$. We shall use this in order to prove local convergence.

The incompressible Navier-Stokes equation cannot be solved by a simple global fixed point iteration. Similar as in the case of the multidimensional Burgers equation  and in the case of our global scheme for the incompressible Navier-Stokes equation on the whole domain of ${\mathbb R}^n$ we choose a time-discretized scheme and construct fixed points which are local in time (cf. \cite{KHNS} and references therein). First note that it is convenient to  choose a time step size $0<\rho< 1$ and solve at each time step $l\geq 1$ for
\begin{equation}
\begin{array}{ll}
\mathbf{v}^{\rho,l}=\left(v^{\rho,l,1},\cdots, v^{\rho,l,n}\right)^T:[l-1,l]\times M\rightarrow TM
\end{array}
\end{equation}
the incompressible Navier-tokes equation on the domain $[l-1,l]\times M$ in transformed time coordinates $\tau$ with
\begin{equation}\label{timetrans}
t=\rho\tau
\end{equation}
and with initial data $\mathbf{v}^{\rho,l}(l-1,.)=\mathbf{v}^{\rho,l-1}(l-1,.)$ being the final data of the previous time step number $l-1$, where $\mathbf{v}^{\rho,1}(0,.)=\mathbf{h}(.)$. Here, as usual, the symbol $TM$ denotes the tangential bundle of the manifold $M$.
\begin{rem}
Note that in (\ref{timetrans}) we choose a time step size $\rho >0$ which is independent of the time step number $l$. In \cite{KHNS} we considered schemes with time step size of order 
$\rho_l\sim \frac{1}{l}$.
\end{rem}
However, a more interesting fact about our scheme of a Navier-Stokes equation on compact manifolds is that we can operate on classical function spaces. We shall show that we have classical solutions for the family
\begin{equation}\label{restr}
v^{ij}=v^i|_{U_j},~1\leq i\leq n,~j\in J,
\end{equation}
where the related family
\begin{equation}\label{part}
v^{ijj}=\phi_{jj}v^{ij},~1\leq i\leq n,~j\in J
\end{equation}
may be considered as a representation of a solution of the Navier Stokes Cauchy problem on compact manifolds. Note that (\ref{restr}) can be reconstructed from (\ref{part}) and vice versa.
Next we introduce a classical function space
\begin{prop}
For open and bounded $\Omega\subset {\mathbb R}^n$ and consider the function space
\begin{equation}
\begin{array}{ll}
C^m\left(\Omega\right):={\Big \{} f:\Omega \rightarrow {\mathbb R}|~\partial^{\alpha}f \mbox{ exists~for~}~|\alpha|\leq m\\
\\
\mbox{ and }\partial^{\alpha}f \mbox{ has an continuous extension to } \overline{\Omega}{\Big \}}
\end{array}
\end{equation}
where $\alpha=(\alpha_1,\cdots ,\alpha_n)$ denotes a multiindex and $\partial^{\alpha}$ denote partial derivatives with respect to this multiindex. Then the function space $C^m\left(\overline{\Omega}\right)$ with the norm
\begin{equation}
|f|_m:=|f|_{C^m\left(\overline{\Omega}\right) }:=\sum_{|\alpha|\leq m}{\big |}\partial^{\alpha}f{\big |}
\end{equation}
is a Banach space. Here,
\begin{equation}
{\big |}f{\big |}:=\sup_{x\in \Omega}|f(x)|.
\end{equation}

\end{prop}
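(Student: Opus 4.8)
The plan is to verify first that $\left(C^m(\overline{\Omega}),|\cdot|_m\right)$ is a normed vector space, and then to establish completeness by the standard Cauchy-sequence argument, reducing everything to the completeness of $\left(C(\overline{\Omega}),|\cdot|\right)$ under the supremum norm. Throughout, note that since $\overline{\Omega}$ is compact, the supremum $|f|=\sup_{x\in\Omega}|f(x)|$ of a function with continuous extension to $\overline{\Omega}$ is attained and equals $\max_{x\in\overline{\Omega}}|f(x)|$, so all the sup-seminorms in play are finite.

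First I would check that $C^m(\overline{\Omega})$ is a vector space under pointwise addition and scalar multiplication: if $\partial^{\alpha}f$ and $\partial^{\alpha}g$ exist and extend continuously to $\overline{\Omega}$ for $|\alpha|\le m$, then so does $\partial^{\alpha}(f+\lambda g)=\partial^{\alpha}f+\lambda\partial^{\alpha}g$. Finiteness of $|f|_m$ follows because it is a finite sum of the finite quantities $|\partial^{\alpha}f|$. Positive definiteness is immediate from $|f|_m\ge|f|$ and the fact that a continuous function on $\overline{\Omega}$ with zero supremum vanishes identically; homogeneity and the triangle inequality for $|\cdot|_m$ follow termwise from the corresponding properties of each sup-seminorm $|\cdot|$ together with linearity of $\partial^{\alpha}$. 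Hence $|\cdot|_m$ is a norm.

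For completeness, let $(f_k)_{k\ge1}$ be a Cauchy sequence in $C^m(\overline{\Omega})$. For each multiindex $\alpha$ with $|\alpha|\le m$ the inequality $|\partial^{\alpha}f_k-\partial^{\alpha}f_l|\le|f_k-f_l|_m$ shows that $(\partial^{\alpha}f_k)_k$ is Cauchy in $\left(C(\overline{\Omega}),|\cdot|\right)$. By the classical fact that a uniform limit of continuous functions on a compact set is continuous, this space is complete, so there are continuous $g_{\alpha}\in C(\overline{\Omega})$ with $\partial^{\alpha}f_k\to g_{\alpha}$ uniformly on $\overline{\Omega}$; write $f:=g_0$. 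It then remains to identify $g_{\alpha}=\partial^{\alpha}f$, which I would do by induction on $|\alpha|$. For $|\alpha|=1$, say $\alpha=e_i$, fix $x_0\in\Omega$ and a ball $B\subset\Omega$ about $x_0$; for $x\in B$ the fundamental theorem of calculus along the segment from $x_0$ to $x$ gives $f_k(x)=f_k(x_0)+\int_0^1\nabla f_k\bigl(x_0+t(x-x_0)\bigr)\cdot(x-x_0)\,dt$, and the uniform convergence $f_k\to f$, $\partial_i f_k\to g_{e_i}$ lets me pass to the limit under the integral, yielding a $C^1$ primitive relation on $B$ that forces $\partial_i f=g_{e_i}$ on $B$, hence on all of $\Omega$ by connectedness of the argument over overlapping balls, and $\partial_i f$ inherits the continuous extension of $g_{e_i}$ to $\overline{\Omega}$. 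The inductive step applies this same argument to the sequence $(\partial^{\beta}f_k)_k$ for $\beta$ with $|\beta|=|\alpha|-1$. Consequently $f\in C^m(\overline{\Omega})$ and $|f_k-f|_m=\sum_{|\alpha|\le m}|\partial^{\alpha}f_k-g_{\alpha}|\to0$, so the space is Banach.

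I expect the only nontrivial step to be the identification $g_{\alpha}=\partial^{\alpha}f$, i.e.\ the interchange of limit and differentiation; this is where the open set $\Omega$ (so that one can work inside small balls and integrate along segments) and the uniform convergence of the derivatives are genuinely used, and where one must be a little careful to pass from relations valid on $\Omega$ to the required continuous extensions on $\overline{\Omega}$. Everything else is routine verification.
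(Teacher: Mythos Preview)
Your proof is correct and is the standard textbook argument. The paper itself does not give a proof of this proposition at all; it is stated as a classical fact (note the phrasing ``Next we introduce a classical function space'') and then used without further justification. So there is nothing to compare against: you have supplied a proof where the paper supplies none.

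One small stylistic remark: the phrase ``hence on all of $\Omega$ by connectedness of the argument over overlapping balls'' is slightly misleading, since $\Omega$ is merely open and bounded, not assumed connected. But this is harmless, because your identification $\partial_i f = g_{e_i}$ is a purely local statement---the fundamental-theorem-of-calculus argument already establishes it in every ball contained in $\Omega$, and each point of $\Omega$ lies in such a ball---so no global connectedness is needed.
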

Local convergence of the scheme for $v^{ij,p},~p\geq 0$ above implies that for given $t>0$ and for each $j\in J$ a sequence
\begin{equation}
v^{ijj,p}(t,.)=\phi^U_jv^{ij,p}(t,.)\in C^{2}(U_j)
\end{equation}
converges in $C^{2}(U_j)$. This leads to classical solutions $v^{ijj}$ in local spaces $C^{1,2}\left((t_0,t_1)\times U_j \right)$.
We shall construct a bounded classical solution $$\mathbf{v}:\left[0,\infty\right)\times M\rightarrow TM,$$ where all components $v_i$ are globally bounded with respect to the $|.|_{1,2}$ norm, i.e., the suprema up to first order time derivatives and up to second order spatial derivatives are bounded in all local charts of a smooth atlas.  
 
In order to make the scheme global we introduce another idea, a control function $\mathbf{r}$.
First we introduce a new time variable 
\begin{equation}
\tau \rightarrow \rho t,
\end{equation}
where $\rho>0$ is a small constant  to be determined, and such that each time step with respect to the new time-variable $\tau$ is of unit length. Accordingly, for each time step number $l\geq 1$ the original velocity function restricted to the time interval $[l-1,l)$ and denoted by $\mathbf{v}^l=(v^{l,1},\cdots ,v^{l,n})^T$ (recall the implicit use of identifications $T_xM\cong{\mathbb R}^n$ at each $x\in {\mathbb R}^n$) along with
\begin{equation}
\mathbf{v}^{l}:[\rho(l-1),\rho l)\times M\rightarrow TM,
\end{equation}
there exists a time-transformed velocity
\begin{equation}
\mathbf{v}^{\rho ,l}:[l-1,l)\times M\rightarrow TM
\end{equation}
with components $v^{\rho,l,i},~1\leq i\leq n$.
Next, for integers $l\geq 1$ (time-steps) we shall construct  a family of recursively defined functions $\mathbf{r}^l,~l\in {\mathbb N},~1\leq i\leq n$, where
\begin{equation}
\begin{array}{ll}
\mathbf{r}^l:[l-1,l]\times M\rightarrow TM,
\end{array}
\end{equation}
and consider for $1\leq i\leq n$ and $j\in J$ the equation system for the functions
\begin{equation}
v^{r,\rho,l,ij}=v^{\rho ,l,ij}+r^{l,ij}.
\end{equation}
The series $\left( \mathbf{r}^l\right)_l$ defines a global control function $\mathbf{r}$ which looks locally on $U_j$ as a $n$-tuple $\left( r^{l,1j},\cdots, r^{l,nj}\right)^T$ on each subdomain $[l-1,l]\times U_j$ (in a chart). These local representations determine the global control function 
\begin{equation}
\begin{array}{ll}
\mathbf{r}:\left[ 0,\infty\right) \times M\rightarrow TM,
\end{array}
\end{equation}
which is designed in order to control the absolute value and the first derivatives of the functions $\mathbf{v}^{r,\rho,l}$ and of the functions $\mathbf{r}^l$ themselves.  We use the freedom we have in order to define the control function $r^{l,ij}$ in order to determine source terms of the equations for $\mathbf{v}^{r,\rho,l}$, where we define $\mathbf{r}^l$ by dynamic recursion such that $\mathbf{r}$ is a bounded function.
If we know that $\mathbf{r}$ is bounded and globally H\"{o}lder-continuous and has bounded continuous spatial derivatives of first order, and if we know that $\mathbf{v}^r=\mathbf{v}+\mathbf{r}$ is globally H\"{o}lder continuous with bounded continuous derivatives of first order, then $\mathbf{v}=\mathbf{v}^r-\mathbf{r}$ is globally H\"{o}lder-continuous with bounded derivatives of first order. Then we may look at the original 
Navier-Stokes equation and consider the first order coefficients  involving the velocity  components $v^i$ of $n$ equations to be known coefficients of $n$ linear scalar parabolic equation with a source term involving first spatial derivatives of $v^i$ which we can consider to be known, too. Classical representations of the velocity components in terms of a fundamental solution $\Gamma_v$ are then available, and this leads to classical regularity of the velocity components $v^i$. The idea for the construction of the control function $\mathbf{r}$ is as follows. We define the function $\mathbf{r}^l$ for $l\geq 1$ time step by time step where at each time step certain source terms (consumption terms) are defined in terms of the data $\mathbf{v}^{r,\rho,l-1}(l-1,.)$ and $\mathbf{r}^{l-1}(l-1,.)$ obtained at the previous time step. We have some freedom in order to define the control function ${\mathbf r}^l$ at each time step. The source terms are chosen close to $-\mathbf{v}^{r,\rho,l-1}(l-1,.)$ have no step size factor $\rho$ such that the integral over one time step ensures that the source terms control the growth and dominate the time-local growth of the controlled velocity function $\mathbf{v}^{r,l}$. Now if we write down the equation for the controlled velocity function $\mathbf{v}^r$ we get bundle of Navier-Stokes-type terms for $\mathbf{v}^r$ plus a bundle of Navier-Stokes type terms for $\mathbf{r}$ plus mixed terms which are bilinear in $\mathbf{v}^r$ and $\mathbf{r}$. Given $\mathbf{v}^{r,l-1}(l-1,.)$ and $\mathbf{r}^{l-1}(l-1,.)$ at time $\tau=l-1$ we may determine $r^{l,ij}$ via linearized equations with first order coefficients $r^{l-1,ij}$, and with some consumption terms or source terms. Another simpler possibility is to define $r^{l,ij}$ at the beginning of each time step via the indicated source terms. Next we consider the ideas of controlled schemes in more detail. 

\section{Definition of the controlled global scheme}As indicated in the introduction, we shall use the time coordinates $\tau=\rho t$ for some small $\rho$ to be determined. This leads to a factor $\rho$ for all terms except the time derivative if we replace $t$ by $\tau$. At each time step $l\geq 1$ we have to solve for $n\cdot\mbox{card}(J)$ equations for
\begin{equation}
v^{\rho,l,ij}:[l-1,l]\times U_j\rightarrow {\mathbb R},
\end{equation}
for $1\leq i\leq n$ and $j\in J$, and where 
\begin{equation}
 v^{\rho,l,ij}(\tau,.)=v^{l,ij}(t,.).
\end{equation}
This family of local functions determines a spatially global function
\begin{equation}
\mathbf{v}^{\rho,l}:[l-1,l]\times M\rightarrow TM
\end{equation}
at each time step $l\geq 1$. The superscript $\rho$ indicates that we are considering time coordinates $\tau$ related to a time-step size $\rho$, and the number $l\geq 1$ indicates the time step number.
The equation for $v^{\rho,l,ij}$ is based on (\ref{eqiloc1}) and is of the form
\begin{equation}\label{eqiloc1lrho}
\begin{array}{ll}
\frac{\partial v^{\rho,l,ij}}{\partial \tau}-\sum_{j,k=1}^na^{f,U}_{jk}(x)\frac{\partial^2 v^{\rho,l,ij}}{\partial x_j\partial x_k}-\sum_{k=1}^nb^{f,U}_k(x)\frac{\partial v^{\rho,l,ij}}{\partial x_k}\\
\\
+\sum_{k=1}^nv^{\rho,l,ij}_{,k}v^{\rho,l,kj}=\rho S^{j}_{\mbox{int},i}\left( \mathbf{ v}^{\rho,l},\nabla \mathbf{v}^{\rho,l}\right)+\rho S^{J_j}_{\mbox{coup},i}\left( \mathbf{ v}^{\rho,l},\nabla \mathbf{v}^{\rho,l}\right).
\end{array}
\end{equation}
 At each time step $l\geq 1$ the initial values of the functions $v^{l,\rho,ijj}(l-1,.)$ are the final values of the previous time step, i.e.,
\begin{equation}
v^{\rho,l,ij}(l-1,.)=v^{\rho,l-1,ij}(l-1,.).
\end{equation}
Note that the local scheme in (\ref{eqiloc1m}) represents a family of local Navier-Stokes equations in Leray projection form with an additional coupling term. In order to solve this system for each $m\geq 1$ we need an additional iteration. At each iteration step $m$ we first use the information $v^{ij,m-1},~1\leq i\leq n,~j\in J$ from the preceding iteration step $m-1$ in order to determine the coupling term and then we solve iteratively 
linear equations at each substep $p\geq 1$ for functions $v^{ij,m,p}$ approximating $v^{ij,m}$. At approximation substep $p$ of stage $m$ of our construction the functions $v^{ij,m,p}$ functions  solve linear equations of the form
\begin{equation}\label{eqiloc1mk}
\begin{array}{ll}
\frac{\partial v^{\rho,l,ij,m,p}}{\partial \tau}-\rho\sum_{q,k=1}^na^{f,U}_{qk}(x)\frac{\partial^2 v^{\rho,l,ij,m,p}}{\partial x_q\partial x_k}-\rho\sum_{k=1}^nb^{f,U}_k(x)\frac{\partial v^{\rho,l,ij,m,p}}{\partial x_k}\\
\\
+\rho\sum_{k=1}^nv^{\rho,l,ij,m,p}_{,k}v^{\rho,l,k,j,m,p-1}
=\rho S^{j}_{\mbox{int},i}
\left( \mathbf{ v}^{\rho,l,j,m,p-1},\nabla \mathbf{v}^{\rho,l,j,m,p-1}\right)\\
\\ +
\rho S^{J_j}_{\mbox{coup},i}\left( \mathbf{ v}^{\rho,l,ij,m-1},\nabla \mathbf{v}
^{\rho,l,ij,m-1}\right),
\end{array}
\end{equation}
where for $p=1$ we have $v^{\rho,l,kj,m,p-1}=v^{\rho,l,kj,m,0}:=v^{\rho,l,kj,m-1}$. In the following we also write
\begin{equation}
S^{l,j,m,p-1}_{\mbox{int},i}
\left( \mathbf{ v},\nabla \mathbf{v}\right):=
S^{j}_{\mbox{int},i}
\left( \mathbf{ v}^{\rho,l,j,m,p-1},\nabla \mathbf{v}^{\rho,l,j,m,p-1}\right)
\end{equation}
and
\begin{equation}
S^{l,J_j,m-1}_{\mbox{coup},i}\left( \mathbf{ v},\nabla \mathbf{v}\right)=S^{J_j}_{\mbox{coup},i}\left( \mathbf{ v}^{\rho,l,ij,m-1},\nabla \mathbf{v}
^{\rho,l,ij,m-1}\right),
\end{equation}
if this is convenient.
Note that we have linearized the convection term and 'trivialized' the Leray projection term
$S^{j,m,p-1}_{\mbox{int},i}\left( \mathbf{ v},\nabla \mathbf{v}\right)$ in the sense it is defined in by the previous iteration step $p-1$ and serves as a source function of a linear parabolic equation. At each (sub-)iteration step $p$ we define
\begin{equation}
v^{\rho,l,ij,m,p}(l-1,.)=v^{\rho,l-1,ij}(l-1,.),
\end{equation}
of course. Furthermore, for each $j\in J$ we may add a boundary condition for the local problem on $[l-1,l]\times U_j$ such that for all $(\tau,x)\in [l-1,l]\times \partial U_j$  the restriction $v^{\rho,l,ij,m,p}|_{[l-1,l]\times \partial U_j}$ of $v^{\rho,l,ij,m,p}$ to the boundary $[l-1,l]\times \partial U_j$ satisfies 
\begin{equation}\label{boundstand}
v^{\rho,l,ij,m,p}|_{[l-1,l]\times \partial U_j}(\tau,x)=\sum_{k\in J_j}v^{\rho,l,ikk,m-1}(\tau,x).
\end{equation}
Here recall that
\begin{equation}
v^{\rho,l,ikk,m-1}=\phi^{U}_kv^{\rho,l,ik,m-1},
\end{equation}
and that $J_j$ is defined in (\ref{Jj}).
Note that we use the partition of unity here in order to ensure that the latter prescription (\ref{boundstand}) is well-defined for each iteration step $m\geq 1$. We shall see later that iteration with respect to $p$ and then with respect to $m$ leads to the time-local condition that in the limit for all $j,k,q\in J$ and all $(\tau,x)\in U_k\cap U_q\cap \partial U_j\neq \oslash$ we have
\begin{equation}\label{welldefbound}
v^{\rho,l,ik}(\tau,x)=v^{\rho,l,iq}(\tau,x).
\end{equation}
 Furthermore, in case $m=1$ we have to supplement 
\begin{equation}\label{boundstand2}
v^{\rho,l,ij,m-1,p}|_{[l-1,l]\times \partial U_j}(\tau,x)=v^{\rho,l-1,ik}(\tau,x).
\end{equation}
for all $(\tau,x)\in [l-1,l]\times \partial U_j$.

The first step for a global existence proof of classical solutions of the incompressible Navier-Stokes equation on compact manifolds is to show the time-local convergence of this scheme.

The next step is to define a global scheme where the main idea is to add a control function  $\mathbf{r}$ which controls the growth of the velocity function and does have at most linear growth in time itself. The growth control is time step by time step where the definition of the control function increments $\delta \mathbf{r}^l=\mathbf{r}^l-\mathbf{r}^{l-1}$ depends on the data $\mathbf{v}^{r,l-1}(l-1,.)$. The freedom of choice in the control function we have allows us to define the control function increments close to source terms of the equation for the controlled velocity function which have a damping effect on the growth of the latter function. For each time step $l\geq 1$ we shall write down the system for $\mathbf{v}^{r,\rho,l}:=\mathbf{v}^{\rho,l}+\mathbf{r}^l$ on $[l-1,l]\times M$. We have some freedom to choose the control function $\mathbf{r}^l$. There are several possible strategies. Maybe the most simple one is the following: at the beginning of time step $l$ the controlled velocity function $\mathbf{v}^{r,\rho,l-1}(l-1,.)$ and the control function are given. We may construct the local solution of the uncontrolled Navier Stokes equation problem on the domain $[l-1,l]\times M$ with these data. Then we may define a control function increment
\begin{equation}\label{controlincrement}
\delta \mathbf{r}^l=\mathbf{r}^l-\mathbf{r}^{l-1}
\end{equation}
which will allow us to control the growth of the controlled velocity value function $\mathbf{v}^{r,\rho,l}$ at time step $l$. At the same time the control function increment in (\ref{controlincrement}) should be bounded - e.g. in all local chart representations the components of the control functions are bounded by $1$.
 \begin{equation}
{\big |}{\mathbf v}^r{\big |}^n_{C\left(\left(0,T\right),H^m\left(M\right)\right)  }\leq C,
\end{equation}
and 
\begin{equation}
{\big |}{\mathbf r}{\big |}^n_{C\left(\left(0,T\right),H^m\left(M\right)\right)  }\leq C+CT.
\end{equation}
This implies a linear upper bound in time for $\mathbf{v}$, of course, i.e., for generic $C>0$ we also have for $\mathbf{v}=\mathbf{v}^r-\mathbf{r}$ the upper bound
\begin{equation}
{\big |}{\mathbf v}{\big |}^n_{C\left(\left(0,T\right),H^m\left(M\right) \right) }\leq C+CT.
\end{equation}
This is sufficient in order to prove global regular existence, and our argument below is designed in order to obtain this result and a related result with respect to the norm ${\big |}.{\big |}^n_{C\left(\left(0,T\right),H^m\left(M\right)\right)}$. Here the latter norm may be defined using a finite atlas of local charts with image $U_j\subset {\mathbb R}^n$, and the upperscipt $n$ indicates that in a local chart we have a vector valued function and may take the maximum over the norms ${\big |}.{\big |}_{C\left(\left(0,T\right),H^m\left(U_j\right)\right)}$ for each component of the vector and all indices $j\in J$ of the finite atlas. 
 
You may also look at this in the following alternative way: we may define $\mathbf{r}^l$ via a linear equation with a right side source term $\mathbf{\phi}^l$ which involves the data $\mathbf{v}^{r,\rho,l-1}(l-1,.)$ and $\mathbf{r}^{l-1}(l-1,.)$ obtained at the previous step. The linearzed equation for $\mathbf{r}^l$ is close to a full nonlinear equation with source term right side $\mathbf{\phi}^l$ which we may derive (or construct) from the equation for the controlled velocity function $\mathbf{v}^{r,\rho,l}$ at time step $l$. We may plug in our equation for $\mathbf{r}^l$ into the right side of the equation for $\mathbf{v}^{r,\rho,l}$ in order to show that the growth of $\mathbf{v}^{r,\rho,l}$ is uniformly bounded independently of the time-step number $l\geq 1$. The construction of the iteration scheme describes the road on which we proceed in order to prove the global existence. Further  comment will be made later on.

The equation for the controlled velocity function, i.e., for $v^{r,\rho,l,ijmp}:[l-1,l]\times U_j\rightarrow {\mathbb R}$ follows from
(\ref{eqiloc1mk}) via the definition
\begin{equation}
v^{r,\rho,l,ij,m,p}=v^{\rho,l,ij,m,p}+r^{l,ij}.
\end{equation}
As we said the family of control functions $r^{l,ij},~1\leq i\leq n,~j\in J$ may be defined directly at the beginning of time step $l\geq 1$ or by a linear equation in a first substep of time step $l$ and is fixed then.
\begin{rem}
In the alternative view we choose linear equations for the control functions $r^{l,ij}$ because we have classical semi-explicit representations for these equations and do not need to set up an additional iteration scheme for the control function. On the other hand the linearized equation is lose to a nonlinear equation for the control function with a source term right side which may  be suggested by the equation for $\mathbf{v}^{r}$. However, the most simple point of view is to solve a local uncontrolled Navier Stokes equation at each time step $l$ with controlled data $\mathbf{v}^{r,\rho,l-1}(l-1,.)$ from the previous time step and then define the control function increment $\delta \mathbf{r}^l$ appropriately in order to control the growth of the controlled velocity function at time step $l$. 
\end{rem}

 Therefore, in this construction $r^{l,ij}$ bears no iteration index $m$ and no subiteration index $p$.  We shall consider various possible definitions of the control functions   $r^{l,ij},~1\leq i\leq n,~j\in J$ below. The different possibilities of definitions of control functions can have an effect on the the definition of scheme for the controlled velocity functions as well. We next define the main possibilities of a scheme for the controlled velocity functions and then we shall consider corresponding definitions of the control functions. 
 
\begin{itemize}   

\item[i)] We can define a global controlled scheme without solving the equation for $\mathbf{v}^{r,\rho,l}$. Instead we just solve a locally uncontrolled Navier Stokes equation with controlled velocity data of the previous time step. At the beginning of time step $l\geq 1$ we have computed $\mathbf{v}^{r,\rho,l-1}(l-1,)$ or we have the data $\mathbf{h}$. Then for all $1\leq i\leq n$ and all $j\in J$ we locally solve the equation  
  \begin{equation}\label{eqiloc1lrhor}
\begin{array}{ll}
\frac{\partial v^{r^{l-1},\rho,l,ij}}{\partial \tau}-\sum_{q,k=1}^na^{f,U}_{qk}(x)\frac{\partial^2 v^{r^{l-1},\rho,l,ij}}{\partial x_q\partial x_k}-\sum_{k=1}^nb^{f,U}_k(x)\frac{\partial v^{r^{l-1},\rho,l,ij}}{\partial x_k}\\
\\
+\sum_{k=1}^nv^{\rho,l,ij}_{,k}v^{r^{l-1},\rho,l,kj}=\rho S^{j}_{\mbox{int},i}\left( \mathbf{ v}^{r^{l-1},\rho,l},\nabla \mathbf{v}^{r^{l-1},\rho,l}\right)\\
\\
+\rho S^{J_j}_{\mbox{coup},i}\left( \mathbf{ v}^{r^{l-1},\rho,l},\nabla \mathbf{v}^{r^{l-1},\rho,l}\right),
\end{array}
\end{equation}
with data
\begin{equation}
\begin{array}{ll}
v^{r^{l-1},\rho,l,ij}(l-1,.)=v^{r^{l-1},\rho,l-1,ij}(l-1,.)\\
\\
:=v^{\rho,l-1,ij}(l-1,.)+r^{l-1,ij}(l-1,.)=v^{r,\rho,l-1,ij}(l-1,.).
\end{array}
\end{equation}
We may solve this local equation iteratively where we may add boundary conditions as in (\ref{boundt}), but we can also work without these boundary conditions, i.e. it suffices to impose boundary conditions for the Poisson equations coded in the coupling term. Without boundary conditions the solutions may be not unique but it will become unique on a global scale when all local equations fit together. This is done by an iteration scheme (cf. proof of the main theorem).  This procedure leads to a time-local and spatially global solution $\mathbf{v}^{r,\rho,l}$ on $[l-1,l]\times M$. 

\item[ii)]  We can also write down the local equations for the controlled velocity functions, and work with these equations directly. However, this approach is formally more complicated. The iteration scheme for $v^{r,\rho,l,ij,m,p}$ then becomes
\begin{equation}\label{eqiloc1mkr}
\begin{array}{ll}
\frac{\partial v^{r,\rho,l,ij,m,p}}{\partial \tau}-\rho\sum_{q,k=1}^na^{f,U}_{qk}(x)\frac{\partial^2 v^{r,\rho,l,ij,m,p}}{\partial x_q\partial x_k}-\rho\sum_{k=1}^nb^{f,U}_k(x)\frac{\partial v^{r,\rho,l,ij,m,p}}{\partial x_k}\\
\\
+\rho\sum_{k=1}^nv^{r,\rho,l,ij,m}_{,k}v^{r,\rho,l,kj,m,p-1}\\
\\
=\frac{\partial r^{l,ij}}{\partial \tau}-\rho\sum_{q,k=1}^na^{f,U}_{qk}(x)\frac{\partial^2 r^{l,ij}}{\partial x_q\partial x_k}-\rho\sum_{k=1}^nb^{f,U}_k(x)\frac{\partial r^{l,ij}}{\partial x_k}\\
\\
+\rho\sum_{k=1}^nr^{l,ij}_{,k}v^{r,\rho,l,kj,m,p-1}+\rho\sum_{k=1}^nv^{r,\rho,l,ij,m,p-1}_{,k}r^{l,kj}\\
\\
+\rho\sum_{k=1}^nr^{l,ij}_{,k}r^{l,kj}+
\rho S^{r,l,j,m,p-1}_{\mbox{int},i}\left( \mathbf{ v},\nabla \mathbf{v}\right)+\rho S^{r,l,J_j,m-1}_{\mbox{coup},i}\left( \mathbf{ v},\nabla \mathbf{v}\right),
\end{array}
\end{equation}
where
\begin{equation}
 S^{r,l,j,m,p-1}_{\mbox{int},i}\left( \mathbf{ v},\nabla \mathbf{v}\right)=S^{l,j,m,p-1}_{\mbox{int},i}\left( \mathbf{ v}+\mathbf{r},\nabla \mathbf{v}+\nabla\mathbf{r}\right),
\end{equation}
\begin{equation}
S^{r,l,J_j,m-1}_{\mbox{coup},i}\left( \mathbf{ v},\nabla \mathbf{v}\right)=S^{l,J_j,m-1}_{\mbox{coup},i}\left( \mathbf{ v}+\mathbf{r},\nabla \mathbf{v}+\nabla\mathbf{r}\right)
\end{equation}
and where
\begin{equation}
v^{r,\rho,l,ij,m,p}(l-1,.)=v^{r,\rho,l-1,ij}(l-1,.).
\end{equation}
Again, for each $j\in J$ we may impose boundary conditions, i.e.,
for all $(\tau,x)\in [l-1,l]\times \partial U_j$ the restriction $v^{r,\rho,l,ij,m,p}|_{[l-1,l]\times \partial U_j}$ of $v^{r,\rho,l,ij,m,p}$ to the boundary $[l-1,l]\times \partial U_j$ satisfies
\begin{equation}\label{boundstandr}
v^{r,\rho,l,ij,m,p}|_{[l-1,l]\times \partial U_j}(\tau,x)=\sum_{k\in J_j}v^{r,\rho,l,ikk,m-1}(\tau,x).
\end{equation}
In case $m=1$ we define 
\begin{equation}\label{boundstand2}
v^{r,\rho,l,ij,m-1,p}|_{[l-1,l]\times \partial U_j}(\tau,x)=v^{r,\rho,l-1,ik}(\tau,x)
\end{equation}
for all $(\tau,x)\in [l-1,l]\times \partial U_j$.
The global scheme is an iteration in time of a local iteration scheme of this controlled equation {\it and} a definition of the control function $\mathbf{r}^l$. At each time step $l\geq 1$ the functions  $v^{r,\rho,l-1,ij}(l-1,.)$ and $r^{l-1,ij}(l-1,.)$ are known for $1\leq i\leq n$ and $j\in J$. For $l=1$ we may set $r^{l-1,ij}\equiv 0$ and
\begin{equation}
v^{r,\rho,0,ij}(l-1,.)=v^{\rho,0,ij}(l-1,.)=h^{ij}(.).
\end{equation}
\end{itemize}

Next we define the control functions $r^{l,ij}$ for $1\leq i\leq n$ and $j\in J$ by the following short list. At the beginning of time step $l$ the functions $v^{r,\rho,l-1,ij}(l-1,.),~1\leq i\leq n,~j\in J$ and $r^{l-1,ij}(l-1,.)~1\leq i\leq n,~j\in J$ are known. Especially, for this reason it is sufficient to define the control function increments $\delta r^{l,ij}=r^{l,ij}-r^{l-1,ij}(l-1,.),~1\leq i\leq n,~j\in J$ in order to determine the control function at the next time $l$. All the following definitions make sense only if we choose the time step size small enough. We may even have a deceasing time-step size which depends on the time step number $l$ and is of order $\rho_l\sim \frac{1}{l}$. This would still be sufficient in order to render the scheme global. We shall first define various alternatives of control function increments, and then we shall discuss time step sizes below more explicitly in the statement of the main theorem and its proof. 

\begin{itemize}

\item[i)] Our simplest definition of the control functions increments $\delta r^{l,ij},~1\leq i\leq n,~j\in J$ is
\begin{equation}
\delta r^{l,ij}(\tau,x):=\int_{l-1}^{\tau}\left( -\frac{v^{r,\rho,l-1,ij}(l-1,y)}{C}\right) C_{U_j}p^{l,j}(s-(l-1),x-y)dyds,
\end{equation}
where $p^{l,j}$ is the fundamental solution of the local diffusion equation of the Navier Stokes equation in local coordinates on $[l-1,l]\times U_j$ (which is essentially a heat equation in locally flat coordinates). The constant $C_{U_j}$ is a normalisation constant which ensures that the local integral over t$U_j$ integrates to $1$. The use of such a density is optional in the end.  We shall observe that this definition leads to  global bound of $\mathbf{v}^{r,\rho,l}$ for all $l$, i.e., there exists a constant $C>0$ depending only on dimension $n$, data $\mathbf{h}$ and the order of multivariate derivatives $|\alpha|\leq m$, and which is independent of the time step number $l\geq 1$ such that for all $1\leq i\leq n,~j\in J$ and all $l\geq 1$ we have
\begin{equation}
\sup_{l\geq 1}\max_{1\leq i\leq n,j\in J}\sup_{(\tau,x)\in [l-1,l]\times U_j}{\big |}D^{\alpha}_xv^{r,\rho,l,ij}(\tau,x){\big |}\leq C.
\end{equation}
Furthermore, there is a linear upper bound for the control functions $r^{l,ij},~1\leq i\leq n,~j\in J$, i.e. we have
\begin{equation}
\max_{1\leq i\leq n,j\in J}\sup_{(\tau,x)\in [l-1,l]\times U_j}{\big |}D^{\alpha}_xr^{l,ij}(\tau,x){\big |}\leq C+Cl.
\end{equation}
This result is sufficient in order to prove global regular existence of the solution $\mathbf{v}=\mathbf{v}^r-\mathbf{r}$ of the incompressible Navier Stokes equation. It leads to an global linear bound of this solution. This result can be sharpened if we look at a more involved definition of the control functions.
\item[ii)]  the control functions increments $\delta r^{l,ij},~1\leq i\leq n,~j\in J$ can be defined independently of the locally uncontrolled velocity functions $v^{r^{l-1},\rho,l,ij}$ just in terms of the data $r^{l-1,ij}(l-1,.)$ and $v^{r,\rho,l-1,ij}(l-1,.)$ of the previous time step. For all $1\leq i\leq n$ and all $j\in J$, and $\tau\in [l-1,l]$ and $x\in U_j$ we consider a short list of two possible definition of local control function values $\delta r^{l,ij}(l,x)=r^{l,ij}(l,x)-r^{l-1,ij}(l-1,x)$. Next we 
define a property $P$ which has the effect of a switch. According to the situation whether the property $P$ holds or does not hold at the end of time step $l-1$ we choose the control function increment at time step $l$. As long as we are in the situation of item i) with the additional condition that we have the upper bound $C>0$ for the modulus of the control functions (and for multivariate spatial derivatives of )  $r^{l,ij}(l,.)$ we continue to define the control function as in item i). However, if the modulus of the control function (or some multivariate derivative) exceeds $C$, then we define the control function increment in terms of the negative data of the control function itself.  Let
\begin{equation}
M^{l-1,\alpha}_r:=\max_{1\leq i\leq n,~j\in J}\sup_{x\in U_j}{\big |}D^{\alpha}_xr^{l,ij}(l-1,x){\big |},
\end{equation}
As the Riemannian manifold is compact, these maxima are obtained for some $x\in U_{j_0}$, where $U_j$ is the image of a chart with domain $V_{j_0}\subset M$. 
Let 
\begin{equation}\label{Pcond}
\begin{array}{ll}
\mbox{P}:~ M^{l-1,\alpha}_r\leq C \mbox{for all $\alpha$ with }|\alpha|\leq 2.
\end{array}
\end{equation}
Then we simply write $P$ if the condition $\mbox{P}$ in (\ref{Pcond}) holds and $\mbox{non-P}$ if the condition $\mbox{P}$ in (\ref{Pcond}) does not hold.
\begin{equation}
\begin{array}{ll}
\delta r^{l,ij}(\tau,x):=\\
\\
\left\lbrace \begin{array}{ll}
				\int_{l-1}^{\tau}\left( -\frac{v^{r,\rho,l-1,ij}(l-1,y)}{C}\right)C_{U_j}p^{l,j}(\tau-(l-1),x-y)dy~\mbox{if $\mbox{P}$}\\
				\\
				\int_{l-1}^{\tau}\left( -\frac{r^{l-1,ij}(l-1,y)}{C}\right)C_{U_j}p^{l,j}(\tau-(l-1),x-y)dy~\mbox{if $\mbox{non-P}$}.
                                \end{array}\right.
\end{array}
\end{equation}
For the source terms involved we also use the notation
\begin{equation}
\phi^{v,l,ij}=-\frac{v^{r,\rho,l-1,ij}(l-1,y)}{C},
\end{equation}
and
\begin{equation}
\phi^{r,l,ij}=-\frac{r^{l-1,ij}(l-1,y)}{C}.
\end{equation}

For $l=1$ we take as data for the control function
\begin{equation}
r^{l-1,ij}(l-1,.)=r^{0,ij}(0,.)=\frac{h^{ij}(.)}{C},
\end{equation}
for all $1\leq i\leq n$ and $j\in J$. Note that the control function data have the same sign as the controlled velocity function data at the first time step.

\item[iii)] All the preceding definitions of a control function can be realized in a related context, where we solve equations for the control functions with a source term on the right side which is then essentially defined to be the integrand of the control functions increments in item i) or alternatively in item ii). 
Note that the terms $v^{r,\rho,l-1,ij}$ and $r^{l-1,ij}$ in (\ref{eqiloc1mkr*}) below are abbreviations for the same functions evaluated at $l-1$, i.e. they equal $v^{r,\rho,l-1,ij}(l-1,.)$ and $r^{l-1,ijj}(l-1,.)$.
The defining equation for $r^{l,ij,m}$ (iteration index $m$ is in order to define a spatially global solution of this linear equation on the whole manifold $M$) is:

\begin{equation}\label{eqiloc1mkr*}
\begin{array}{ll}
\frac{\partial r^{l,ij,m}}{\partial \tau}-\rho\sum_{q,k=1}^na^{f,U}_{qk}(x)\frac{\partial^2 r^{l,ij,m}}{\partial x_q\partial x_k}-\rho\sum_{k=1}^nb^{f,U}_k(x)\frac{\partial r^{l,ij,m}}{\partial x_k}\\
\\
+\rho\sum_{k=1}^nr^{l,ij,m}_{,k}r^{l-1,kjm}\\
\\
=\phi^{l,ij}-\rho\sum_{k=1}^nr^{l,ij,m}_{,k}v^{r,\rho,l-1,kj}-\rho\sum_{k=1}^nv^{r,\rho,l-1,ij}_{,k}r^{l-1,kj,m}\\
\\
-
\rho S^{r,l-1,jm}_{\mbox{int},i}\left( \mathbf{ v},\nabla \mathbf{v}\right)-\rho S^{r,l-1,J_j,m}_{\mbox{coup},i}\left( \mathbf{ v},\nabla \mathbf{v}\right),
\end{array}
\end{equation} 
where we have to define the latter to terms and the source terms $\phi^{l,ij}$ (the idea of the definition of the latter has been indicated above). Here, the upper index $l-1$ indicates that we evaluate related functions at data obtained from the previous time step. Furthermore,
\begin{equation}
S^{r,l-1,j,m}_{\mbox{int},i}\left( \mathbf{ v},\nabla \mathbf{v}\right)=S^{j}_{\mbox{int},i}\left( \mathbf{ v}^{r,\rho,l-1,ij,m},\nabla \mathbf{v}^{r,\rho,l-1,ij,m}\right).
\end{equation}
The initial conditions for (\ref{eqiloc1mkr*}) are
\begin{equation}
r^{l,ij,m}(l-1,.)=r^{l-1,ij}(l-1,.).
\end{equation}
Since we want to construct a global solution
\begin{equation}
\mathbf{r}^l:[l-1,l]\times M\rightarrow TM
\end{equation}
to this linear parabolic equation, we have to ensure that the local solutions match on the boundaries $\partial U_j,~j\in J$. As in the case of a controlled velocity function we shall see that this will be  obtained automatically by the communication of all local equations via the local Lery projection terms which we have defined via  Green's functions. We can also ensure this by boundary conditions in the same spirit as before, i.e., we define for all $\tau,x)\in [l-1,l]\times \partial U_j$
\begin{equation}
r^{l,ij,m}(\tau ,x)=\sum_{k\in J_j}r^{l,ikk(m-1)}(\tau ,x).
\end{equation}
 Then we can define the source terms $\phi^{l,ij}$ related to the prescriptions in item i) or item ii). Following the ideas of item i) we define
 \begin{equation}
\phi^{l,ij}(\tau,x):= -\frac{v^{r,\rho,l-1,ij}(l-1,x)}{C}.
\end{equation}
Following the ideas of item ii) we would define
 \begin{equation}
\begin{array}{ll}
\delta r^{l,ij}(\tau,x):=\\
\\
\left\lbrace \begin{array}{ll}
				 -\frac{v^{r,\rho,l-1,ij}(l-1,x)}{C}~\mbox{if $\overline{P}$}\\
				\\
				 -\frac{r^{l-1,ij}(l-1,x)}{C}~\mbox{if $\mbox{non}-\overline{P}$}.
                                \end{array}\right.,
\end{array}
\end{equation}
where $\overline{P}$ is a property which is defined analogously as the property $P$.
\end{itemize}

The constant $C\gg 1$ will be chosen below as will the time step size where we may choose a step size of order
\begin{equation}
\rho\sim \frac{1}{C^3}.
\end{equation}
This means for $1\leq i\leq n$ and $j\in J$ that both functions $\phi^{v,l,ij}$ $\phi^{r,l,ij}$ can dominate all terms which have a factor $\rho$ concerning the growth behavior of the controlled velocity function and of the control function from time step $l-1$ to time step $l$ respectively.
Next to dimension the constant $C>0$ depends only on the initial data $\mathbf{h}$, constants of the manifold (determined by the Christoffel symbols or by the Riemann tensor), the first and second order coefficients of the local equations, and the order of multivariate derivatives for which we want to construct an upper bound.
Next we formulate the complete scheme in the simplified version as in $i)$ and $ii)$ above. At the first time step $l=1$ we set 
\begin{equation}
r^{l-1,ij}(l-1,.)=r^{0,ij}(0,.)\equiv \frac{h^{ij}}{C.}
\end{equation}
 for all $1\leq i\leq n$ and $j\in J$. At time step $l=1$ the data of the controlled velocity functions are then given by 
\begin{equation}\label{0data}
 v^{r,\rho,l-1,ij}=v^{r,\rho,0,ij}=\left(1+\frac{1}{C}\right) h^{ij}.
 \end{equation}
 \begin{rem}
Alternatively, we could define 
\begin{equation}
r^{l-1,ij}(l-1,.)=r^{0,ij}(0,.)\equiv 0
\end{equation}
and, accordingly,
\begin{equation}\label{0data*}
 v^{r,\rho,l-1,ij}=v^{r,\rho,0,ij}= h^{ij}.
 \end{equation}
 However, we prefer to give a description where the scheme for the first time step looks similar as the scheme for the later time steps $l\geq 2$.
\end{rem}
Next we compute a local solution $v^{r^{l-1},\rho,l,ij}=v^{r^{0},\rho,l,ij}:=v^{r,\rho,0,ij}$ of the uncontrolled Navier Stokes equation on $[0,1]\times M$ with data (\ref{0data}).
This is done by a spatially global iteration of spatially local equation with an iteration index $m$ where each local equation (fixed $j\in J$) is solved by a subiteration with another iteration index $p$. For $l=1$ and each $m\geq 1$ we solve $\mbox{card}(J)$ local Navier-Stokes equation of the form
\begin{equation}\label{eqiloc1m1stepsub}
\begin{array}{ll}
\frac{\partial v^{r^0,\rho,1,ij,m,p}}{\partial \tau}-\rho\sum_{j,k=1}^na^{f,U}_{jk}(x)\frac{\partial^2 v^{r^0,\rho,1,ij,m,p}}{\partial x_j\partial x_k}-\rho\sum_{k=1}^nb^{f,U}_k(x)\frac{\partial v^{r^0,\rho,1,ij,m,p}}{\partial x_k}\\
\\
+\rho\sum_{k=1}^nv^{r^0,\rho,1,ij,m,p}_{,k}v^{r^0,\rho,1,kj,m,p-1}\\
\\
=\rho S^{j}_{\mbox{int},i}\left( \mathbf{ v}^{r^0,\rho,1,m,p-1},\nabla \mathbf{v}^{r^{0},\rho,1,m,p-1}\right)
+\rho S^{J_j}_{\mbox{coup},i}\left( \mathbf{ v}^{r^0,\rho,1,m-1},
\nabla \mathbf{ v}^{r^0,\rho,1,m-1}\right),
\end{array}
\end{equation}
with initial data
\begin{equation}
v^{r^0,\rho,1,ij,m,p}(0,.)=\left(1+\frac{1}{C}\right) h^{ij}(.).
\end{equation}

At this first time step for the subiteration with iteration index $p$ we may impose the boundary condition
\begin{equation}\label{boundstand0p}
v^{r,\rho,1,ij,m,p}|_{[0,1]\times \partial U_j}(\tau,x)=\sum_{k\in J_j}v^{r,\rho,1,ikk,m-1}(\tau,x),
\end{equation}
where the double superscript $kk$ on the right side of (\ref{boundstand0}) indicates the involvement of the partition of unity as explained above. However, imposing these boundary conditions is not necessary as we shall observe.

For a step size $\rho>0$ which is small enough the sequences
\begin{equation}
\left( v^{r^0,\rho,1,ij,m,p}\right)_{p\in {\mathbb N}}
\end{equation}
converge classically to a limit
\begin{equation}
v^{r^0,\rho,1,ij,m}:=\lim_{p\uparrow \infty}v^{r^0,\rho,1,ij,m,p}\in C^{1,2}\left([0,1]\times U_j\right) 
\end{equation}
for all $1\leq i\leq n$ and $j\in J$.
For the functional sequence $\left( v^{r^0,\rho,1,ij,m}\right)_{m\in {\mathbb N}}$ we get a spatially global iteration scheme
\begin{equation}\label{eqiloc1m1stepm}
\begin{array}{ll}
\frac{\partial v^{r^0,\rho,1,ij,m}}{\partial \tau}-\rho\sum_{j,k=1}^na^{f,U}_{jk}(x)\frac{\partial^2 v^{r^0,\rho,1,ij,m}}{\partial x_j\partial x_k}-\rho\sum_{k=1}^nb^{f,U}_k(x)\frac{\partial v^{r^0,\rho,1,ij,m}}{\partial x_k}\\
\\
+\rho\sum_{k=1}^nv^{r^0,\rho,1,ij,m}_{,k}v^{r^0,\rho,1,kj,m}\\
\\
=\rho S^{j}_{\mbox{int},i}\left( \mathbf{ v}^{r^0,\rho,1,m},\nabla \mathbf{v}^{r^0,\rho,1,m}\right)+\rho S^{J_j}_{\mbox{coup}}\left( \mathbf{ v}^{r^0,\rho,1,ij,m-1},\nabla \mathbf{v}^{r^0,\rho,1,ij,m-1}\right),
\end{array}
\end{equation}
with initial data
\begin{equation}\label{l1init}
v^{r^0,\rho,1,ij,m}(0,.)=\left(1-\frac{1}{C} \right) h^{ij}(.).
\end{equation}
Again it is optional to impose a boundary condition of the form
\begin{equation}\label{boundstand0}
v^{r^0,\rho,1,ij,m}|_{[0,1]\times \partial U_j}(\tau,x)=\sum_{k\in J_j}v^{r^0,\rho,1,ikk,m-1}(\tau,x),
\end{equation}
and where for $m=1$ the initial data may be used to initialize the boundary conditions.

For a step size $\rho>0$ which is small enough the sequences
\begin{equation}
\left( v^{r^0,\rho,1,ij,m}\right)_{m\in {\mathbb N}}
\end{equation}
converge to a classical limit
\begin{equation}
v^{r^0,\rho,1,ij}:=\lim_{m\uparrow \infty}v^{r^{l-1},\rho,1,ij,m}\in C^{1,2}\left([0,1]\times U_j\right) 
\end{equation}
for all $1\leq i\leq n$ and $j\in J$.
We shall see that for small $\rho>0$ this is a time-local fixed point iteration which leads to a spatially global and time-local solution
\begin{equation}
\mathbf{v}^{r^0,\rho,1}:[0,1]\times M\rightarrow TM.
\end{equation}
Having computed the uncontrolled velocity function $v^{r^0,\rho,1}$ with 'controlled data'  $v^{r^0,\rho,0}$ which are given in local coordinates at the first time step by (\ref{l1init}), we define the control function increments $\delta r^{1,ij}=r^{1,ij}-r^{0,ij}=r^{1,ij}-\left(-\frac{1}{C} \right)h^{ij}$  according to item $i)$ or item $ii)$ above and define the controlled velocity functions at time step $l=1$ on $[l-1]\times U_j$ by
\begin{equation}
v^{r,\rho,1,ij}=v^{r^0,\rho,1,ij}+\delta r^{1,ij}=v^{\rho,1,ij}+r^{1,ij}.
\end{equation}
Here, $v^{\rho,1,ij},~1\leq i\leq n, j\in J$ is the solution in  of the time-local uncontrolled Navier Stokes equation with uncontrolled data $h^{ij}$ and the function $r^{1,ij}=r^{0,ij}+\delta r^{1,ij},~1\leq i\leq n, j\in J$ determine the time-local control function on $[0,1]\times M$. 
This describes the first time step.

Recursively, at the beginning of time step $l\geq 2$ the functions
\begin{equation}
r^{l-1,ij}(l-1,.),~v^{r,\rho,l-1,ij}(l-1,.),~1\leq i\leq n,~j\in J
\end{equation}
 are determined.
Then we determine a local solution $v^{r^{l-1},\rho,l,ij},~1\leq i\leq n,j\in J$ of the uncontrolled Navier Stokes equation on $[0,1]\times M$ with data
\begin{equation}
v^{r^{l-1},\rho,l,ij}(l-1,.):=v^{r,\rho,l-1,ij}(l-1,.).
\end{equation}
Again, this is done by a spatially global iteration of spatially local equation with an iteration index $m$ where each local equation (fixed $j\in J$) is solved by a subiteration with another iteration index $p$. For $l\geq 2$ and each $m\geq 1$ we solve $\mbox{card}(J)$ local Navier-Stokes equation of the form
\begin{equation}\label{eqiloc1m1stepsubl}
\begin{array}{ll}
\frac{\partial v^{r^{l-1},\rho,l,ij,m,p}}{\partial \tau}-\rho\sum_{j,k=1}^na^{f,U}_{jk}(x)\frac{\partial^2 v^{r^{l-1},\rho,l,ij,m,p}}{\partial x_j\partial x_k}-\rho\sum_{k=1}^nb^{f,U}_k(x)\frac{\partial v^{r^{l-1},\rho,l,ij,m,p}}{\partial x_k}\\
\\
+\rho\sum_{k=1}^nv^{r^{l-1},\rho,l,ij,m,p}_{,k}v^{r^{l-1},\rho,l,kj,m,p-1}\\
\\
=\rho S^{j}_{\mbox{int},i}\left( \mathbf{ v}^{r^{l-1},\rho,l-1,m,p-1},\nabla \mathbf{v}^{r^{0},\rho,l,m,p-1}\right)\\
\\
+\rho S^{J_j}_{\mbox{coup},i}\left( \mathbf{ v}^{r^{l-1},\rho,l,m-1},
\nabla \mathbf{ v}^{r^{l-1},\rho,l,m-1}\right),
\end{array}
\end{equation}
with initial data
\begin{equation}
v^{r^{l-1},\rho,l,ij,m,p}(l-1,.)=v^{r,\rho,l-1,ij}(l-1,.).
\end{equation}

Boundary conditions are optional and - if imposed- may be defined analogously as in the first time step.

For a step size $\rho>0$ which is small enough the sequences
\begin{equation}
\left( v^{r^{l-1},\rho,l,ij,m,p}\right)_{p\in {\mathbb N}}
\end{equation}
converge classically to a limit
\begin{equation}
v^{r^{l-1},\rho,l,ij,m}:=\lim_{p\uparrow \infty}v^{r^0,\rho,l,ij,m,p}\in C^{1,2}\left([0,1]\times U_j\right) 
\end{equation}
for all $1\leq i\leq n$ and $j\in J$.
For the functional sequence $\left( v^{r^{l-1},\rho,l,ij,m}\right)_{m\in {\mathbb N}}$ we get a spatially global iteration scheme
\begin{equation}\label{eqiloc1m1stepml}
\begin{array}{ll}
\frac{\partial v^{r^{l-1},\rho,l,ij,m}}{\partial \tau}-\rho\sum_{j,k=1}^na^{f,U}_{jk}(x)\frac{\partial^2 v^{r^{l-1},\rho,l,ij,m}}{\partial x_j\partial x_k}-\rho\sum_{k=1}^nb^{f,U}_k(x)\frac{\partial v^{r^{l-1},\rho,l,ij,m}}{\partial x_k}\\
\\
+\rho\sum_{k=1}^nv^{r^{l-1},\rho,l,ij,m}_{,k}v^{r^{l-1},\rho,l,kj,m}\\
\\
=\rho S^{j}_{\mbox{int},i}\left( \mathbf{ v}^{r^{l-1},\rho,l,m},\nabla \mathbf{v}^{r^{l-1},\rho,l,m}\right)+\rho S^{J_j}_{\mbox{coup}}\left( \mathbf{ v}^{r^{l-1},\rho,l,ij,m-1},\nabla \mathbf{v}^{r^{l-1},\rho,l,ij,m-1}\right),
\end{array}
\end{equation}
with initial data
\begin{equation}\label{l1init}
v^{r^{l-1},\rho,l,ij,m}(l-1,.)=v^{r^{l-1},\rho,l-1,ij}(l-1,.).
\end{equation}
Again it is optional to impose a boundary condition which are analogous as in the first time step (if imposed).

For a step size $\rho>0$ which is small enough the sequences
\begin{equation}
\left( v^{r^{l-1},\rho,1,ij,m}\right)_{m\in {\mathbb N}}
\end{equation}
converge to a classical limit
\begin{equation}
v^{r^{l-1},\rho,1,ij}:=\lim_{m\uparrow \infty}v^{r^{l-1},\rho,1,ij,m}\in C^{1,2}\left([0,1]\times U_j\right) 
\end{equation}
for all $1\leq i\leq n$ and $j\in J$.
We shall see that for small $\rho>0$ this is a time-local fixed point iteration which leads to a spatially global and time-local solution
\begin{equation}
\mathbf{v}^{r^{l-1},\rho,l}:[0,1]\times M\rightarrow TM.
\end{equation}
Having computed the uncontrolled velocity function $v^{r^{l-1},\rho,l}$ with 'controlled data'  $v^{r^{l-1},\rho,l-1}(l-1,.)$ which are given in local coordinates at the first time step by the functions $v^{r^{l-1},\rho,l,ij},~1\leq i\leq n,j\in J$, we define the control function increments $\delta r^{l,ij}=r^{l,ij}-r^{l-1,ij}$  according to item $i)$ or item $ii)$ above and define the controlled velocity functions at time step $l\geq 2$ on $[l-1]\times U_j$ by
\begin{equation}
v^{r,\rho,l,ij}=v^{r^{l-1},\rho,1,ij}+\delta r^{1,ij}=v^{\rho,l,ij}+r^{l,ij}.
\end{equation}
Here, $v^{\rho,l,ij},~1\leq i\leq n, j\in J$ is the solution in  of the time-local uncontrolled Navier Stokes equation with uncontrolled data $v^{\rho,l-1,ij}(l-1,.)$, and the function $r^{l,ij}=r^{l-1,ij}+\delta r^{l,ij},~1\leq i\leq n, j\in J$ determine the time-local control function on $[l-1,l]\times M$.

We shall see that for small $\rho>0$ this is a time-local fixed point iteration which leads to a spatially global and time-local solution
\begin{equation}
\mathbf{v}^{r,\rho,l}:[l-1,l]\times M\rightarrow TM.
\end{equation}

The following argument is essentially constructive up to the point that the choice of the time step size $\rho>0$ and the constant $C>0$ may be analysed more constructively.  This will be done in a subsequent paper. The size of $C>0$ and $\rho>0$ as an upper bound depends on the order of multivariate derivatives of the controlled velocity function and of the control function for which this upper bound is to be constructed, of course. At this analytic stage it suffices to sow that next to the order of derivatives considered, the constants $\rho$ and $C$ depend only the data $\mathbf{h}$ the viscosity $\nu$, and the coefficients $g_{ij}$ of the line element of the underlying manifold $M$.

\section{Main theorem}
We shall assume
\begin{equation}
\mathbf{h}\in C^{\infty}\left(M,TM\right), 
\end{equation}
or, in a family of local charts $\psi_j:V_j\rightarrow U_j\subset {\mathbb R}^n,~j\in J$ covering the manifold $M$ we have components $h^{ijj}\in C^ {\infty}\left(U_j\right)$ for all $1\leq i\leq n$ and $j\in J$, where the additional superscript $j$ indicates the additional use of a subordinate partition of unity.  In this paper we set external forces to zero, although there is no problem to include them into the scheme we proposed. This is just for the sake of formal simplicity of the description.
An equivalent assumption on the initial data ${\mathbf h}$ is they are located in Sobolev spaces  of arbitrary order $s\in {\mathbb R}$, i.e., for all $s\in {\mathbb R}$ we have
\begin{equation}\label{hHs}
{\mathbf h}\in  H^s\left( M, TM\right).
\end{equation}
As indicated in the introduction our proof of a bounded regular solution of the incompressible Navier-Stokes equation consists of three main ideas: a) we introduce a time discretization and a series of linear time transformations $t=\rho \tau$ such that  time step size $1$ in $\tau$-coordinates is related to a small time step size in original coordinates and small coefficients of spatial derivatives in transformed time coordinates. Then a local solution is constructed via two iterations. In a local iteration we determine a local Leray projection term by solving a local Poisson equation the boundary data imposed by the result of other local equations at the previous time step (where at the first iteration step the final data of the previous time step or the initial data at the first time step may be used in order to initialize the boundary conditions for the Poisson equation). For fixed boundary conditions we consider a subiteration in order to solve the local (local in time and local in space) Navier Stokes equations. The local Navier Stokes equation communicate via the boundary conditions of the Poisson equation which determines the Leray projection term. Such an double iteration procedure leads to a spatially global solution which is local in time. The choice of a constant step size $\rho >0$ depends on the size of the data, the manifold $M$, the viscosity information and drift information which is coded in the first and second order coefficients of the local equations. 
 b) in order to control the growth of the solution we introduce time-step by time step a control  function $\mathbf{r}$. Having determined the controlled velocity function $v^{r,\rho,l-1}(l-1,.)$ at the previous time step, we solve first locally in time, i.e., on the domain $[l-1,l]\times M$ the usual uncontrolled Navier Stokes equation, but with data $\mathbf{v}^{r,\rho,l-1}(l-1,.)$. The solution is denoted by $\mathbf{v}^{r^{l-1},\rho,l}$. Then a control function increment is chosen which depends only on the data $\mathbf{v}^{r,\rho,l-1}(l-1,.)$ and $\mathbf{r}^{l-1}(l-1,.)$ of the previous time step. It is defined locally in terms of 'consumption' source term $\phi^{v,l,ij}$ or $\phi^{r,l,ij}$ and  which has been explained in the introduction to some extent. These functions are chosen such that the growth of the functions $v^{r^{l-1},\rho,l,ij}$ over time step $l$, i.e., some norm of the increments $\delta v^{r^{l-1},\rho,l,ij}(l,.)$ is dominated by a respective norm of the increments $\delta r^{l,ij}$ of the control functions. 
 c) We ensure that the control function $\mathbf{r}$ and the function $\mathbf{v}^r$ are globally H\"{o}lder continuous with respect to space and time and bounded or at least of linear growth. This implies that classical arguments lead to classical $C^{1,2}$-regularity of the velocity function $\mathbf{v}$, and hence of the pressure.

The main result of this paper is that for a class of uniformly scalar parabolic operators $L$ acting on the components of the vector field $\mathbf{v}$, and which includes the Hodge and Bochner Laplacian on manifolds,    
the Navier Stokes equation
\begin{equation}\label{navm}
\begin{array}{ll}
\frac{\partial \mathbf{v}}{\partial t}-\nu L\mathbf{v }+\nabla_{\mathbf{v}}\mathbf{v}=-\nabla_Mp,\\
\\
\div\mathbf{v}=0,\\
\\
\mathbf{v}(0,.)=\mathbf{h}\in C^{\infty}\left(M,TM \right) ,
\end{array}
\end{equation}
we have a global scheme which converges to a global classical solution to (\ref{navm}) in its Leray projection form. In the scheme we use a local representation of the Leray projection operator $P$ which is the orthogonal projection of $L^2\left(M,TM \right)$ onto the kernel of the divergence.

We prove
\begin{thm}\label{mainthm}
Given any dimension $n$  let $\mathbf{h}\in C^{\infty}\left(M,TM\right)$ (or, equivalently satisfy (\ref{hHs}) for any $s\in {\mathbb R}$). Then there is a global classical solution
\begin{equation}
\mathbf{v}\in  C^{1,2}\left( \left[0,\infty \right)\times M,TM\right)
\end{equation}
of the Navier-Stokes equation system (\ref{navm}).
\end{thm}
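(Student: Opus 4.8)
The plan is to carry out the three-step programme announced before the statement: a time discretization with the rescalings $t=\rho\tau$, a local-in-time construction by the nested $m$- and $p$-iterations, and the insertion of a control function $\mathbf{r}$ enforcing bounds that are uniform in the time-step number. First I would fix once and for all a finite atlas $\psi_j:V_j\to U_j$ with $C^\infty$-boundaries and a subordinate partition of unity $(\phi_j)_{j\in J}$, choose the constant $C\gg 1$ and the step size $\rho\sim C^{-3}$ (both depending only on $n$, $\mathbf{h}$, $\nu$ and the $g_{ij}$), and then argue by induction on the time step $l\ge 1$. The induction hypothesis is that at $\tau=l-1$ the data $v^{r,\rho,l-1,ij}(l-1,\cdot)$ and $r^{l-1,ij}(l-1,\cdot)$ are controlled, i.e.\ their spatial derivatives up to order $2$ are bounded by $C$ in every chart, with $|D^\alpha_x r^{l-1,ij}|\le C+C(l-1)$.

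On the slab $[l-1,l]$ one first solves the \emph{uncontrolled} localized system (\ref{eqiloc1m1stepsubl}) with these controlled data. For fixed iteration index $m$ the $p$-subiteration is a self-map of a ball in $C^{1,2}([l-1,l]\times U_j)$: representing each $v^{r^{l-1},\rho,l,ij,m,p}$ through the fundamental solution $p^{l,j}$ of the locally flat diffusion part and estimating the convective, interior-Leray and coupling terms --- all of which carry a factor $\rho$ --- one obtains a contraction once $\rho$ is small, hence a classical limit $v^{r^{l-1},\rho,l,ij,m}$. Running the $m$-iteration then glues the charts: the coupling terms $S^{J_j}_{\mathrm{coup},i}$ transmit boundary data through the Green function $G_{U_j}$, the iteration again contracts for small $\rho$, and in the limit the matching condition (\ref{welldefbound}) holds, producing a spatially global, time-local classical solution $\mathbf{v}^{r^{l-1},\rho,l}\in C^{1,2}([l-1,l]\times M,TM)$ of the uncontrolled equation. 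Next I would define the control increment $\delta\mathbf{r}^l$ as in item i) (or item ii) with the switch $P$ / non-$P$) and set $\mathbf{v}^{r,\rho,l}=\mathbf{v}^{\rho,l}+\mathbf{r}^l$. The decisive point is that the consumption source $\phi^{v,l,ij}=-v^{r,\rho,l-1,ij}(l-1,\cdot)/C$ carries \emph{no} factor $\rho$, so its time integral over the unit $\tau$-step changes the solution by an amount of size $\sim |v^{r,\rho,l-1}|/C$, which for $C$ large and $\rho\sim C^{-3}$ strictly dominates the $O(\rho)$ growth generated over one step by the Navier--Stokes terms and by the bilinear mixed terms $v^r\!\cdot r$ appearing in the controlled equation (\ref{eqiloc1mkr}). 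Estimating these contributions again through the heat kernel and the Green-function surface integrals closes the induction: $\sup_l\max_{1\le i\le n,\,j\in J}\sup_{[l-1,l]\times U_j}|D^\alpha_x v^{r,\rho,l,ij}|\le C$ for $|\alpha|\le 2$, while $|D^\alpha_x r^{l,ij}|\le C+Cl$; in alternative ii) the switch guarantees that whenever $\mathbf{r}$ would exceed $C$ the increment consumes $\mathbf{r}$ itself, keeping its growth linear.

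Since $\mathbf{v}^{r}$ is then uniformly bounded and $\mathbf{r}$ of at most linear growth, both are globally H\"older continuous in space and time with bounded first spatial derivatives --- parabolic interior estimates on each slab, with H\"older constants uniform in $l$ because the slabs are translates of a fixed one --- and hence $\mathbf{v}=\mathbf{v}^{r}-\mathbf{r}$ is globally H\"older with bounded first derivatives and at most linear growth. Feeding $\mathbf{v}$ back as a \emph{known} H\"older coefficient and known source into the $n$ decoupled linear scalar parabolic equations for the components $v^i$ (the Leray pressure term being now a fixed function of the already known $\mathbf{v},\nabla\mathbf{v}$), the classical fundamental-solution representation upgrades $\mathbf{v}$ to $C^{1,2}([0,\infty)\times M,TM)$; the pressure is recovered from its Poisson equation. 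Transporting back through the atlas and using the identifications $T_xM\cong\mathbb{R}^n$ yields the asserted global classical solution of (\ref{navm}).

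\textbf{Main obstacle.} The crux is the uniform-in-$l$ a priori bound: one must show that over a single unit $\tau$-step the control increment really dominates the growth of \emph{all} controlled quantities --- absolute values \emph{and} first and second spatial derivatives, including the bilinear $v^r\!\cdot r$ couplings and the nonlocal Green-function surface terms --- with every constant independent of $l$. This forces a simultaneous, careful calibration of $C$ and $\rho$, together with parabolic (Schauder / heat-kernel) estimates valid up to $\partial U_j$, so that the coupling terms neither spoil the contraction of the $p$- and $m$-iterations nor accumulate over the infinitely many time steps.
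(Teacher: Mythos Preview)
Your proposal is correct and follows essentially the same route as the paper: the four-step programme of (i) $p$-subiteration contraction via the locally flat fundamental solution, (ii) $m$-iteration gluing the charts through the Green-function coupling terms, (iii) the growth control by the $\rho$-free consumption source $-v^{r,\rho,l-1}(l-1,\cdot)/C$ dominating the $O(\rho)$ Navier--Stokes increments, and (iv) the final bootstrap to $C^{1,2}$ by treating $\mathbf{v}$ as a known H\"older coefficient in a linear parabolic equation. The one minor discrepancy is that in alternative ii) the paper actually obtains a \emph{uniform} (not merely linear) bound on $\mathbf{r}$ via the $P$/non-$P$ switch; otherwise your outline matches the paper's argument and correctly isolates the calibration of $C$ and $\rho$ as the crux.
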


\begin{proof}
We consider the schemes described in i) and ii) above and do the proof in four steps.
\begin{itemize}
 \item[1)] In a first step we prove for each $j\in J$ the local convergence of the series
 \begin{equation}\label{series1}
 \left( v^{r^{l-1},\rho,l,ij,m,p}\right)_{p\in {\mathbb N}}
 \end{equation}
in $C^{1,2}\left(\left[l-1,l\right]\times U_j\right)$ for all $1\leq i\leq n$. The members of the series in (\ref{series1}) are solutions of local uncontrolled Navier Stokes equations with data
\begin{equation}
v^{r^{l-1},\rho,l,ij,m,p}(l-1,.):=v^{r,\rho,l-1,ij}(l-1,.),
\end{equation}
i.e. we use as initial data the final data of the previous time step which are independent of the iteration index $p$ (of the spatially local iteration) and the iteration index $m$ (of the spatially global iteration) of the time-local convergence. As we use the controlled data of the previous time step, but have no control function increment $\delta r^{l,ij}$ defined on $[l-1,l]\times U_j$ involved in the computation of $v^{r^{l-1},\rho,l,ij,m,p}$ we use the superscript $r^{l-1}$ in order to indicate the dependence of the functions on the control function $\mathbf{r}^{l-1}:[0,l-1]\times M\rightarrow TM$ and its difference to the controlled velocity functions $v^{r,\rho,l,ij}$ even in the limit. In this step of the proof and in step 2) of the proof we shall show that for all $(\tau,x)\in [l-1,l]\times U_j$ all $j\in J$ and all $1\leq i\leq n$ we have 
\begin{equation}
v^{r,\rho,l,ij}(\tau,x)=\lim_{m,p\uparrow \infty}v^{r^{l-1},\rho,l,ij,m,p}(\tau,x)+\delta r^{l,ij}(\tau,x).
\end{equation}
 \item[2)] In a second step we prove convergence of the time-local and spatially global iteration scheme,where the data $\mathbf{r}^{l-1}(l-1,.)$ and $\mathbf{v}^{r,\rho,l-1}(l-1,.)$ are in $ C^2(M,TM)$. We consider the simplest iteration scheme where the communication between the local equations for the controlled velocity functions $v^{r,\rho,l,ij,m}$ and $r^{l,ij}$ is exclusively realized via the boundary conditions of the Leray projection terms of the local equations. In our representations of local solutions these boundary terms are coded in Green's functions. In this second step st we shall show that for all $(\tau,x)\in [l-1,l]\times U_j$ all $j\in J$ and all $1\leq i\leq n$ we have 
\begin{equation}
v^{r,\rho,l,ij}(\tau,x)=\lim_{m\uparrow \infty}v^{r^{l-1},\rho,l,ij,m}(\tau,x)+\delta r^{l,ij}(\tau,x),
\end{equation}
where for $j_1\neq j_2$, all $x\in U_{j_1}\cap U_{j_2}\neq \oslash$ we have for all $\tau\in [l-1,l]$ that
\begin{equation}
v^{r,\rho,l,ij_1}(\tau,x)=v^{r,\rho,l,ij_2}(\tau,x).
\end{equation}
 
 \item[3)] In a third step we consider first the scheme of item i) in the preceding section and prove that we have a global uniform bound for the controlled velocity function and a linear upper bound for the control function (which suffices in order to prove global existence). Then we consider the refined the scheme define in ii) of section 2 and prove a sharper result that states the existence of an uniform upper bound of the control function and the controlled velocity function. We prove that for this scheme a certain upper bound $C>0$ is preserved in two time steps, i.e., there exists a constant $C>0$ such that for all $l\geq 1$, all $j\in J$ and all $1\leq i\leq n$ we have
 \begin{equation}
\sup_{x\in U_j} {\big |}v^{r,\rho,l-1,ij}(l-1,x){\big |}\leq C\rightarrow \sup_{x\in U_j} {\big |}v^{r,\rho,l+1,ij}(l+1,x){\big |}\leq C.
 \end{equation}
 Furthermore the upper bound can be chosen such that a similar implication holds for the control functions, i.e., there is  a constant $C>0$ such that for all $l\geq 1$, all $j\in J$ and all $1\leq i\leq n$ we have
 \begin{equation}
\sup_{x\in U_j} {\big |}r^{l,ij}(l,x){\big |}\leq C\rightarrow \sup_{x\in U_j} {\big |}r^{l+1,ij}(l+2,x){\big |}\leq C.
 \end{equation}
 Strengthening this result we show that we can find such an upper bound for multivariate derivatives as well, provided that we have local sufficient local regularity of the controlled velocity functions and of the control functions, i.e., if $v^{r,\rho,l}\in C^{m,2m}\left([l-1,l]\times,M,TM \right)$ for some $m\geq 1$ and for $l-1\geq 0$, then for all multiindices $\alpha$ with $|\alpha|\leq m$ we have a constant $C_m$ such that
 \begin{equation}
\sup_{x\in U_j} {\big |}D^{\alpha}_xv^{r,\rho,l-1,ij}(l-1,x){\big |}\leq C_m\rightarrow \sup_{x\in U_j} {\big |}D^{\alpha}_xv^{r,\rho,l+1,ij}(l+1,x){\big |}\leq C_m.
 \end{equation} 
Similarly, if $r^{l}\in C^{m,2m}\left([l-1,l]\times,M,TM \right)$ for some $m\geq 1$ and for $l-1\geq 0$, then for all multiindices $\alpha$ with $|\alpha|\leq m$ we have a constant $C_m$ such that
\begin{equation}
\sup_{x\in U_j} {\big |}D^{\alpha}_xr^{l,ij}(l,x){\big |}\leq C_m\rightarrow \sup_{x\in U_j} {\big |}D^{\alpha}_xr^{l+1,ij}(l+2,x){\big |}\leq C_m.
 \end{equation}
 Finally we show that for the simplified scheme described in item i) of the introduction we still have a global upper bound. 
\item[4)] In a fourth step we show the existence of a globally bounded classical solution $\mathbf{v}^{\rho}\in C^{1,2}\left([0,\infty)\times M,TM \right)$ of the time-transformed incompressible Navier Stokes equation exists. It follows immediately that the global solution $\mathbf{v}\in C^{1,2}\left([0,\infty)\times M,TM \right)$ in original time coordinates exists. 
\end{itemize}

\subsection{step 1: proof of local existence of solutions at each time step (in absence of a control function) } 
We emphasize that the time step size $\rho$ is chosen generically, i.e. in this first step of the proof we shall choose $\rho >0$ such that local convergence is obtained in the scheme without control function. Similarly the bound $C>0$ of for the (modulus of) controlled velocity functions $v^{r,\rho,l,ij}$, and the (modulus of) the control functions $r^{l,ij}$ itself. Similarly for spatial derivatives of these functions.
For $l=1$ we set $r^{1,ij}(0,.)\equiv -\frac{h^{ij}}{C}$ for all $1\leq i\leq n$ and $j\in J$. As we said we may also define the control functions to be zero at the first time step, but in order to have a more uniform description for all time steps $l>0$ we use the prescription in terms of functions which are proportional to the negative data functions. In general we assume that the final data of the previous time step at $\tau=l-1$ are the initial data of time step $l$ and that the data for the controlled velocity functions at time step $l$, i.e., the functions
\begin{equation}
v^{r,\rho,l,ij}(l-1,.),~1\leq i\leq n,~j\in J,
\end{equation}
and the data of the control functions, i.e., the data
\begin{equation}
r^{l,ij}(l-1,.),~1\leq i\leq n,~j\in J
\end{equation}
are determined. Next we fix the iteration step $m\geq 1$ of the spatially global iteration. This determines that boundary data, which we use in order to determine the local Leray projection term using Green's function. For $m=1$ we take $v^{r^{l-1},\rho,l,ij,m-1}(\tau,x)=v^{r^{l-1},\rho,l-1,ij}(l-1,x)$ in order to determine the boundary conditions of the Poisson equation which determine the Leray projection term (as described in the introduction). For $m>1$ we assume that the functions
\begin{equation}
v^{r^{l-1},\rho,l,ij,m-1},~1\leq i\leq n,~j\in J,
\end{equation}
and the functions
\begin{equation}
r^{l,ij},~1\leq i\leq n,~j\in J
\end{equation}
have been determined. Recall the notation here: the upper script $r^{l-1}$ means that we solve local uncontrolled Navier Stokes equations with controlled data $v^{r^{l-1},\rho,l,ij}(l-1,.)=v^{r,\rho,l-1,ij}(l-1,.)$. As the control function increments $\delta r^{l,ij}=r^{l,ij}-r^{l-1,ij}$ are defined in terms of the data $v^{r,\rho,l-1,ij}(l-1,.)$ and $r^{l-1,ij}(l-1,.)$, we can add this control function increment after computation of the controlled velocity functions $v^{r^{l-1},\rho,l,ij},~1\leq i\leq n,j\in J$ in order to determine  $v^{r,\rho,l,ij}=v^{r^{l-1},\rho,l,ij}+\delta r^{l,ij},~1\leq i\leq n,j\in J$.

Next we fix $j\in J$, the iteration index $m\geq 1$ and choose flat coordinates on $U_j$. For $1\leq i\leq n$, $l\geq 1$ and the local iteration index $p\geq 1$ we solve the local Navier Stokes equation problem   for $v^{r,\rho,l,ij,m,p}$ of the form
\begin{equation}\label{eqiloc1mkproof}
\begin{array}{ll}
\frac{\partial v^{r^{l-1},\rho,l,ij,m,p}}{\partial \tau}-\rho\sum_{q,k=1}^na^{f,U}_{qk}\frac{\partial^2 v^{r^{l-1},\rho,l,ij,m,p}}{\partial x_q\partial x_k}\\
\\-\rho\sum_{k=1}^nb^{f,U}_k\frac{\partial v^{r,\rho,l,ij,m,p}}{\partial x_k}
+\rho\sum_{k=1}^nv^{r^{l-1},\rho,l,ij,m,p-1}_{,k}v^{r^{l-1},\rho,l,kj,m,p-1}\\
\\
=\rho S^{j}_{\mbox{int},i}\left( \mathbf{ v}^{r^{l-1},l,ij,m,p-1},\nabla \mathbf{v}^{r,l,ij,m,p-1}\right)\\
\\
+
\rho S^{J_j}_{\mbox{coup}}\left( \mathbf{ v}^{r^{l-1},l,ij,m-1},\nabla \mathbf{v}^{r^{l-1},l,ij,m-1}\right),\\
\\
v^{r^{l-1},\rho,l,ij,m,p}(l-1,.)=v^{r,\rho,l,ij}(l-1,.).
\end{array}
\end{equation}
Note that the coupling term is independent of the iteration index $p$. Furthermore the 'Burger equation term' 
\begin{equation}
 \rho\sum_{k=1}^nv^{r^{l-1},\rho,l,ij,m,p-1}_{,k}v^{r^{l-1},\rho,l,kj,m,p-1}
\end{equation}
is taken from the previous iteration step. This has the advantage that we can represent solutions in the form of convolutions with certain fundamental solutions of parabolic equations with constant coefficients as we use locally flat coordinates on $U_j$.

\begin{rem}
We may define additional boundary conditions at each iteration step $p$. Natural boundary conditions are of the form
\begin{equation}
v^{r^{l-1},\rho,l,ij,m,p}(l-1,.)=v^{r,\rho,l-1,ij}(l-1,.),
\end{equation}
and for each $j\in J$ we added a boundary condition 
\begin{equation}\label{boundstandproofb}
v^{r^{l-1},\rho,l,ij,m,p}|_{[l-1,l]\times \partial U_j}(\tau,x)=\sum_{k\in J_j}v^{r^{l-1},\rho,l,i,kk,m-1}(\tau,x).
\end{equation}
Note the double superscript $kk$ which indicates the use of a partition of unity. This ensures that differentiability of the restriction of $v^{r,\rho,l,ij,m,p}$ to $[l-1,l]\times \partial U_j$ in (\ref{boundstandproofb}).
In any case we have no dependence on the boundary condition if we consider iteration with respect to the iteration index $p$.
\end{rem}
The solution of the local uncontrolled system with controlled data $v^{r^{l-1},\rho,l,ij,m}$ (limit $p\uparrow \infty$) may be represented in the form
\begin{equation}
v^{r^{l-1},\rho,l,ij,m}=v^{r^{l-1},\rho,ij,m,1}+\sum_{p=2}^{\infty}\delta v^{r^{l-1},\rho,l,ij,m,p}
\end{equation}
along with $\delta v^{r^{l-1},\rho,l,ij,m,p}=v^{r^{l-1},\rho,l,ij,m,p}-v^{r^{l-1},\rho,l,ij,m,p-1}$. For $p=1$ we denote 
$\delta v^{r^{l-1},\rho,l,ij,m,1}=v^{r^{l-1},\rho,l,ij,m,1}-v^{r^{l-1},\rho,l,ij,m,0}=v^{r^{l-1},\rho,l,ij,m,1}-v^{r^{l-1},\rho,l-1,ij,m}(l-1,.)$. This has the advantage of zero initial conditions and zero boundary conditions for $\delta v^{r^{l-1},\rho,l,ij,m,p}$ for $p\geq 2$. Furthermore the equation for $v^{r^{l-1},\rho,l,ij,m,1}$ is a linear parabolic equation with differentiable coefficients. The equation for $\delta v^{r^{l-1},\rho,l,ij,m,p}$ becomes
\begin{equation}\label{eqiloc1mkproofdelta}
\begin{array}{ll}
\frac{\partial \delta v^{r^{l-1},\rho,l,ij,m,p}}{\partial \tau}-\rho\sum_{q,k=1}^na^{f,U}_{qk}\frac{\partial^2 \delta v^{r^{l-1},\rho,l,ij,m,p}}{\partial x_q\partial x_k}-\rho\sum_{k=1}^nb^{f,U}_k\frac{\partial \delta v^{r^{l-1},\rho,l,ij,m,p}}{\partial x_k}\\
\\
-\rho\sum_{k=1}^n\delta v^{r^{l-1},\rho,l,ij,m,p-1}_{,k}v^{r^{l-1},\rho,l,kj,m,p-1}-\rho\sum_{k=1}^n\delta v^{r^{l-1},\rho,l,ij,m,p-1}_{,k}v^{r^{l-1},\rho,l,kj,m,p}\\
\\
=-\rho\sum_{k=1}^n v^{r^{l-1},\rho,l,ij,m,p-1}_{,k}\delta v^{r^{l-1},\rho,l,kj,m,p-1}-\rho\sum_{k=1}^n \delta v^{r^{l-1},\rho,l,ij,m,p-1}_{,k} v^{r^{l-1},\rho,l,kj,m,p-1}\\
\\
+\rho S^{j}_{\mbox{int},i}\left( \mathbf{ v}^{r^{l-1},l,ij,m,p-1},\nabla \mathbf{v}^{r^{l-1},l,ij,m,p-1}\right)\\
\\
-\rho S^{j}_{\mbox{int},i}\left( \mathbf{ v}^{r^{l-1},l,ij,m,p-2},\nabla \mathbf{v}^{r^{l-1},l,ij,m,p-2}\right),
\end{array}
\end{equation}
with zero initial and boundary conditions which do not depend on the iteration index $p$.
Note that the coupling term $\rho S^{J_j}_{\mbox{coup}}\left( \mathbf{ v}^{r^{l-1},l,ij,m-1},\nabla \mathbf{v}^{r^{l-1},l,ij,m-1}\right)$ disappears since it does not depend on the iteration index $p$.
Hence the solution for the functions $\delta v^{r^{l-1},\rho,l,ij,m,p}$ for $p\geq 1$ has the representation

\begin{equation}
\delta v^{r^{l-1},\rho,l,ij,m,p}(\tau,x)=\int_{l-1}^{\tau}\int_{U_j}\delta s^{r^{l-1},\rho,l,ij,m,p-1}_{\mbox{int}}(s,y)p^{l,ij}(\tau,x;s,y)dyds,
\end{equation}
where $p^{l,ij}$ is the fundamental solution of
\begin{equation}\label{eqiloc1mkprooffund}
\begin{array}{ll}
\frac{\partial v^{r^{l-1},\rho,l,ij,m,p}}{\partial \tau}-\rho\sum_{q,k=1}^na^{f,U}_{qk}\frac{\partial^2 v^{r^{l-1},\rho,l,ij,m,p}}{\partial x_q\partial x_k}-\rho\sum_{k=1}^nb^{f,U}_k\frac{\partial v^{r^{l-1},\rho,l,ij,m,p}}{\partial x_k}
=0,
\end{array}
\end{equation}
and where $\delta s^{r^{l-1},\rho,l,ij,m,p-1}_{\mbox{int}}(s,y)$ is an abbreviation for the right side of (\ref{eqiloc1mkproofdelta}). Note that, the equation in (\ref{eqiloc1mkprooffund}) is a linearly transformed heat equation as we use locally flat coordinates .

From classical theory of scalar parabolic equations we have for $p\geq 1$ that
\begin{equation}
v^{r^{l-1},\rho,l,ij,m,1},~v^{r^{l-1},\rho,l,ij,m,p}\in C^{1,2}\left([l-1,l]\times \overline{U_j}\right). 
\end{equation}

\begin{rem}
Even if we consider a scheme with variable first order terms, or a generalised scheme with variable second order and first order coefficients, then we can use constructions of  the fundamental solution $p^{l,ij,m,p}$ in terms of the Levy expansion. Recall that on an arbitrary domain $[l-1,l]\times \Omega$ form is given by
\begin{equation}
p^{l,ij}(\tau,x;s,y):=N^l_A(\tau,x;s,y)+\int_s^{\tau}\int_{{\mathbb R}^n}N^l_A(\tau,x;\sigma,\xi)\phi(\sigma,\xi;s,y)d\sigma d\xi,
\end{equation}
where for $\left( a^{ij,U_j}(y)\right)$ defining the inverse of $\left( a^{U_j}_{ij}(y)\right)$ we have
\begin{equation}
N^l_A(\tau,x;s,y)=\frac{\sqrt{\det\left[ a^{ij,U_j}(y)\right] }}{\sqrt{\sum_{i,j=1}^n4\pi \rho (\tau-s)}^n}\exp\left(-\frac{\sum_{i,j=1}^n
a^{ij,U_j}(x^i-y^i)(x^j-y^j)}{4\rho_l\nu (\tau-s)} \right),
\end{equation}
and $\phi$ is a recursively defined function which is H\"{o}lder continuous in $x$, i.e.,
\begin{equation}
\phi(\tau,x;s,y)=\sum_{m=1}^{\infty}(L_lN^l_A)_m(\tau,x;s,y),
\end{equation}
along with the recursion
\begin{equation}
\begin{array}{ll}
(L_lN^l_A)_1(\tau,x;s,y)=L_lN^l_A(\tau,x;s,y)\\
\\
=\frac{\partial N^l_A}{\partial \tau}-\rho a_{ij}^{l,U_j} \frac{\partial^2}{\partial x_i\partial x_j}N^l_{A}+\rho\sum_{k=1}^n b^{U_j}_j\frac{\partial N^l_A}{\partial x_k}\\
\\
=\rho\sum_{k=1}^n b_k\frac{\partial N^l_A}{\partial x_k},\\
\\
(LN^l_A)_{m+1}(\tau ,x):=\int_s^t\int_{\Omega}\left( LN^l_A(\tau,x;\sigma,\xi)\right)_m LN^l_A(\sigma,\xi;s,y)d\sigma d\xi.
\end{array}
\end{equation}
We may then use the adjoint of the fundamental solution in order to obtain estimates similar as the estimates below which we shall get for the simplified convolutive expressions of approximating local solution functions $v^{r^{l-1},\rho,l,ij,m,p}$. 
Note that for small $\rho>0$ the Levy expansion is a kind of perturbation of the leading term $N^l_A$.
\end{rem}
 For locally flat coordinates we have a fundamental solution $p^{l,ij,m,p-1}$ which depends on the time difference $\tau-s$ and on the spatial differences $x-y$ such that there is a function $p^{*,l,ij}$ such that
 \begin{equation}
 p^{*,l,ij}(\tau-s,x-y)=p^{l,ij}(\tau,x;s,y).
 \end{equation}
At this point we shall see that it becomes advantageous if we have imposed boundary conditions as in (\ref{boundstandproofb}). Then the increments have zero boundary conditions and we may apply partial integration where boundary terms disappear. 
 Especially for second partial derivatives with respect to the variables $x_k$ and $x_q$ we have
\begin{equation}\label{solstep0rep}
\begin{array}{ll}
\delta v^{r^{l-1},\rho,l,ij,m,p}_{,k,q}(\tau,x)=\int_{l-1}^{\tau}\int_{U_j}\delta s^{r^{l-1},\rho,l,ij,m,p-1}_{\mbox{int}}(s,y)p^{l,ij}_{,k,q}(\tau,x;s,y)dyds\\
\\
=\delta v^{r^{l-1},\rho,l,ij,m,p}_{,k,q}(\tau,x)=\int_{l-1}^{\tau}\int_{U_j}\delta s^{r^{l-1},\rho,l,ij,m,p-1}_{\mbox{int},q}(s,y)p^{l,ij}_{,k}(\tau,x;s,y)dyds,
\end{array}
\end{equation}
and then we use local integrability of the first order derivatives of the transformed Gaussian and other properties of the Gaussian in order to estimate this representation.
For some constant $c>0$ we get
\begin{equation}
{\big |}\delta s^{r^{l-1},\rho,l,ij,m,p-1}_{\mbox{int},q}(s,y){\big |}\leq \rho c\sum_{0\leq |\alpha|\leq 2}\sup_{\tau\in[l-1,l],x\in U_j}{\big |}\delta v^{r^{l-1},\rho,l,ij,m,p-1}_{,\alpha}(\tau,x){\big |},
\end{equation}
and since the Gaussian $p^{l,ij}$ and its first spatial derivatives $p^{l,ij}_{,k}$ are locally integrable, i.e., have the upper bounds
\begin{equation}
{\big |}p^{l,ij}(\tau-s,x-y){\big |}\leq \frac{C}{(\tau-s)^{\sigma}(x-y)^{n-2\sigma}}
\end{equation}
\begin{equation}
{\big |}p^{l,ij}_{,k}(\tau-s,x-y){\big |}\leq \frac{C}{(\tau-s)^{\mu}(x-y)^{n+1-2\sigma}}
\end{equation}
for some $C>0$ and $\sigma\in (0.5,1)$, we get from (\ref{solstep0rep})
\begin{equation}\label{solstep0rep}
\begin{array}{ll}
\sup_{\tau\in[l-1,l],x\in U_j}{\big |}\delta v^{r^{l-1},\rho,l,ij,m,p}_{,k,q}(\tau,x){\big |}\\
\\
\leq \rho C\sum_{0\leq |\alpha|\leq 2}\sup_{\tau\in[l-1,l],x\in U_j}{\big |}\delta v^{r^{l-1},\rho,l,ij,m,p-1}_{,\alpha}(\tau,x){\big |}
\end{array}
\end{equation}
We get similar estimates for the first order derivatives and for the value function itself, i.e., as $n\geq 2$, for $1+n+n^2< 2n^2 $ terms and a generic constant $C>0$.  Hence for $\rho\leq \frac{1}{4n^2C}$ we get the contraction 
\begin{equation}
|\delta v^{r^{l-1},\rho,l,ij,m,p}|_{1,2}\leq \frac{1}{4} |\delta v^{r^{l-1},\rho,l,ij,m,p-1}|_{1,2}.
\end{equation} 
Furthermore, we may assume that $\rho>0$ is small enough such that
\begin{equation}
|\delta v^{r^{l-1},\rho,l,ij,m,1}|_{1,2}\leq \frac{1}{4}.
\end{equation}
For this time-step size $\rho>0$ we have
\begin{equation}
{\Big |}\sum_{p=2}^{\infty}\delta v^{r^{l-1},\rho,l,ij,m,p}{\Big |}_{1,2}\leq \frac{1}{2},
\end{equation}
(strictly less indeed).
Hence for this time-step size $\rho>0$ the sequence
\begin{equation}
\left( v^{r^{l-1},\rho,l,ij,m,p}\right)_{p\in {\mathbb N}}
\end{equation}
converge classically to a fixed point limit
\begin{equation}
v^{r^{l-1},\rho,l,ij,m}:=\lim_{p\uparrow \infty}v^{r,\rho,l,ij,m,p}\in C^{1,2}\left([0,1]\times U_j\right) 
\end{equation}
for all $1\leq i\leq n$ and $j\in J$, and in Banach space $C^{1,2}\left([l-1,l]\times \overline{U_j} \right)$. For each $m\geq 1$ this fixed point limit solves the initial-boundary value problem for $v^{r^{l-1},\rho,l,ij,m}$ stated in (\ref{eqiloc1m1stepmproof}), (\ref{initialm}), and (\ref{boundstandproof}) below.

\subsection{step 2: Convergence of the spatially global and time-local controlled scheme} 
 
Again, we could define equations for functions $v^{r^{l-1},\rho,l,ij,m}$ where data of nonlinear terms are taken from the previous iteration step in order to get convolutions in terms of Gaussians in locally flat coordinates. However, an alternative is the following:  
 since we {\it know} the functions $v^{r^{l-1},\rho,l,ij,m}$ as the fixed point limits of subiteration steps with iteration index $p$ for all $1\leq i\leq n$ and all $j\in J$, we can represent the functions $v^{r,\rho,l,ij,m}$ in terms of fundamental solutions $p^{l,ij,m}$ of the  equation  
\begin{equation}
\begin{array}{ll}
\frac{\partial p^{l,ij,m}}{\partial \tau}-\rho\sum_{q,k=1}^na^{f,U}_{qk}(x)\frac{\partial^2 p^{l,ij,m}}{\partial x_q\partial x_k}-\rho\sum_{k=1}^nb^{f,U}_k(x)\frac{\partial p^{l,ij,m}}{\partial x_k}\\
\\
+\rho\sum_{k=1}^np^{l,ij,m}v^{r^{l-1},\rho,l,kj,m}
=0.
\end{array}
\end{equation}
We may then use estimates similar as in the previous section, where we can use the adjoint of the fundamental solution.
The functions function $v^{r^{l-1},\rho,l,ij,m}$ are elements of a functional sequence $\left( v^{r^{l-1},\rho,1,ij,m}\right)_{m\in {\mathbb N}}$, where we want to show that the limit
\begin{equation}
v^{r^{l-1},\rho,l,ij}:=\lim_{m\uparrow \infty}v^{r^{l-1},\rho,l,ij,m},~1\leq i\leq n~j\in J
\end{equation}
is a local representation of a time-local and spatially global function
\begin{equation}
\mathbf{v}^{r^{l-1},\rho,l}:\left[l-1,l \right]\times M\rightarrow TM, 
\end{equation}
which solves the incompressible Navier Stokes equation on manifolds on the domain $[l-1,l]\times M$, provided that data satisfy 
\begin{equation}
\mathbf{v}^{r^{l-1},\rho,l}(l-1,.)\in C^{1,2}\left(  M\right).
\end{equation}
We have a spatially global iteration scheme of local initial-boundary value problems of the form
\begin{equation}\label{eqiloc1m1stepmproof}
\begin{array}{ll}
\frac{\partial v^{r^{l-1},\rho,l,ij,m}}{\partial \tau}-\rho\sum_{q,k=1}^na^{f,U}_{qk}(x)\frac{\partial^2 v^{r^{l-1},\rho,l,ij,m}}{\partial x_q\partial x_k}-\rho\sum_{k=1}^nb^{f,U}_k(x)\frac{\partial v^{r^{l-1},\rho,l,ij,m}}{\partial x_k}\\
\\
+\rho\sum_{k=1}^nv^{r^{l-1},\rho,l,ij,m}_{,k}v^{r^{l-1},\rho,l,kj,m}\\
\\
=\rho S^j_{\mbox{int},i}\left( \mathbf{ v}^{r^{l-1},\rho,l,m},\nabla \mathbf{v}^{r^{l-1},\rho,l,m}\right)+\rho S^{j}_{\mbox{coup}}\left(\mathbf{ v}^{r^{l-1},\rho,l,m-1},\nabla \mathbf{v}^{r^{l-1},\rho,l,m-1}\right),
\end{array}
\end{equation}
where for $m=1$ we set $v^{r^{l-1},\rho,l,kj,m-1}=v^{r^{l-1},\rho,l,kj,0}:=v^{r^{l-1},\rho,l-1,kj}(l-1,.)$.
At each iteration step $m$ we defined
\begin{equation}\label{initialm}
v^{r^{l-1},\rho,l,ij,m}(l-1,.)=v^{r^{l-1},\rho,l-1,ij}(l-1,.),
\end{equation}
and for each $j\in J$ we added a boundary condition 
(for $(\tau,x)\in [l-1,l]\times \partial U_j$)
\begin{equation}\label{boundstandproof}
v^{r^{l-1},\rho,l,ij,m}|_{[l-1,l]\times \partial U_j}(\tau,x)=\sum_{k\in J_j}v^{r^{l-1},\rho,l,ikk,m-1}(\tau,x).
\end{equation}
For all $(\tau,x)\in \overline{U_j}$ we have the representation
\begin{equation}\label{repstep1}
\begin{array}{ll}
v^{r^{l-1},\rho,l,ij,m}(\tau,x)=\int_{U_j} v^{r^{l-1},\rho,l-1,ij}(l-1,y)p^{l,ij,m}(\tau,x;0,y)dy\\
\\
+\int_{l-1}^{\tau}\int_{U_j}s^{r^{l-1},\rho,l,ij,m}_{\mbox{int},\mbox{coup}}(s,y)p^{l,ij,m}(\tau,x;s,y)dsdy\\
\\
+\int_{l-1}^{\tau}\int_{\partial U_j}\phi_{\mbox{bd}}(s,y)p^{l,ij,m}(\tau,x;s,y)dS_yds
\end{array}
\end{equation}
where we used the abbreviation
\begin{equation}
\begin{array}{ll}
s^{r^{l-1},\rho,l,ij,m}_{\mbox{int},\mbox{coup}}:=\rho S^{j}_{\mbox{int},i}\left(  v^{r^{l-1},l,ij,m},\nabla \mathbf{v}^{r^{l-1},l,ij,m}\right)+\\
\\
\rho S^{J_j}_{\mbox{coup}}\left( \mathbf{ v}^{r^{l-1},l,ij,m-1},\nabla \mathbf{v}^{r^{l-1},l,ij,m-1}\right),
\end{array}
\end{equation}
and where $dS_y$ denotes a surface element on $\partial U_j$. The boundary relation reduces to an integral equation
\begin{equation}\label{repstep2}
\begin{array}{ll}
\int_{U_j} v^{r^{l-1},\rho,l-1,ij}(l-1,y)p^{l,ij,m}(\tau,x;0,y)dy\\
\\
+\int_{l-1}^{\tau}\int_{U_j}s^{r^{l-1},\rho,l,ij,m}_{\mbox{int},\mbox{coup}}(s,y)p^{l,ij,m}(\tau,x;s,y)dsdy\\
\\
+\int_{l-1}^{\tau}\int_{\partial U_j}\phi_{\mbox{bd}}(s,y)p^{l,ij,m}(\tau,x;s,y)dS_yds\\
\\
=\sum_{k\in J_j}v^{r^{l-1},\rho,l,ikk,m-1}(\tau,x)
\end{array}
\end{equation}
for the function $\phi_{bd}$.
Again we may solve for $n$ scalar initial boundary value problems for $v^{r^{l-1},l,ij,1}\in C^{1,2}\left([l-1,l]\times U_j\right)$ first, show that for small $\rho>0$ we have a small difference $v^{r^{l-1},l,ij,1}-v^{r^{l-1},l-1,ij}(l-1,.)$ and then show that $\delta v^{r^{l-1},l,ij,m}=v^{r^{l-1},l,ij,m}-v^{r,l,ij,m-1}$ satisfies a contraction
\begin{equation}\label{firstmstep}
|\delta v^{r^{l-1},l,ij,m}|_{1,2}\leq \frac{1}{4}|\delta v^{r^{l-1},l,ij,m-1}|_{1,2}.
\end{equation}
This is done using the classical representations of initial-boundary value problems in terms of fundamental solutions as above in (\ref{repstep1}) and (\ref{repstep2}).  Note that we use the term for small $\rho>0$ in a generic sense here, i.e., we first determine a $\rho$ such that we get the desired contraction for $\delta v^{r^{l-1},l,ij,m,p}$ with respect to the subiteration index $p$, and then we use this $\rho$ in order to get another $\rho$ which is smaller or equal such that the (\ref{firstmstep}) is satisfied. We do this here for one time step $l$, and the choice of the control function willendure that it can be done independently of the time step number $l$. 
For a step size $\rho>0$ which is small enough the sequences
\begin{equation}
\left( v^{r^{l-1},\rho,l,ij,m}\right)_{m\in {\mathbb N}}
\end{equation}
converge to a classical limit
\begin{equation}
v^{r^{l-1},\rho,l,ij}:=\lim_{m\uparrow \infty}v^{r^{l-1},\rho,l,ij,m}\in C^{1,2}\left([0,1]\times U_j\right) 
\end{equation}
for all $1\leq i\leq n$ and $j\in J$.
Finally we set
\begin{equation}
v^{r^{l-1},\rho,l,ij}:=v^{r,\rho,l-1,ij}(l-1,.)
\end{equation}
for all $1\leq i\leq n$ and $j\in J$. We may choose $C>0$ such that 
\begin{equation}
|v^{r^{l-1},\rho,l,ij}|_{1,2}\leq C.
\end{equation}

\subsection{step 3: Control of the growth of the functions $\mathbf{r}^l$ and $\mathbf{v}^{r,\rho,l}$ }
Before we analyze global upper bounds in time let is make a remark concerning the time step size. The local contraction result explained in step 1 and step 2 of this proof shows that the increment of the locally uncontrolled velocity function with controlled data $\mathbf{v}^{r^{l-1},\rho,l}\in C^{1,2}\left([l-1,l]\times M\right)$, i.e. the increment
\begin{equation}\label{incrementdelta3}
\delta \mathbf{v}^{r^{l-1},\rho,l}=\mathbf{v}^{r^{l-1},\rho,l}-\mathbf{v}^{r,\rho,l-1}(l-1,.),
\end{equation}
has an upper bound which decreases with the time step size $\rho$ (which appears in the symbol of the local operator via time transformation). Similar for all spatial derivatives as long as local regularity ensures that they are itself of some regularity (at least continuous). We define the upper bound via a local representation of (\ref{incrementdelta3}). For an $\epsilon >0$ depending on the upper bound $C>0$ of the data at time $l-1$, modell parameters such as viscosity or diffusion constants, and structural information of the manifold (including dimension) we can realize a bound 
\begin{equation}
\max_{1\leq i\leq n,j\in J}\sum_{0\leq |\alpha|\leq 2}\sup_{(\tau,x)\in [l-1,l]\times U_j}{\big |}D^{\alpha}_x\delta v^{r,\rho,l-1,ij}(\tau,x){\big |}\leq \epsilon
\end{equation}
which becomes small with the time step size $\rho>0$. As our local iteration scheme starts with the data $v^{r,\rho,l-1,ij}(l-1,.),~1\leq i\leq n,j\in J$ at time step $l\geq 1$ and leads to local iteration schemes of linear coupled parabolic equations with bounded coefficients for the simplest scheme in item i) it suffices to choose a small but constant time step size to preserve the upper bound - although the control function is allowed to have linear growth. In the analysis of the more involved scheme of item ii) of section 2 we shall have a uniform global upper bound for the control function and the controlled velocity function such that a constant time step size $\rho>0$ can be chosen anyway. However, if we consider a scheme with explicit equations for the controlled velocity function  which include the control function (as in item iii) of section 2), and if we consider a simple scheme, then we should better use a decreasing time step size $\rho_l$, i.e., the choice 
\begin{equation}\label{choicerho}
\mbox{simple scheme}+\mbox{item }iii)\Rightarrow \rho_l\sim \frac{1}{l}
\end{equation}
keeps the coefficients of the more involved local iteration equations uniformly in this case too, and this is certainly an advantage, while the choice in (\ref{choicerho}) leads still to a global scheme.

Next, we first prove that the scheme defined in item i) of the second section of this paper is global, i.e., that the controlled velocity functions are uniformly bounded and that the control functions have a global linear upper bound. The result is then sharpenend when we consider the extended control functions of item ii) of the second section of this paper in the sense that we get a global uniform upper bound for the control functions and of the controlled velocity functions.  
In the previous step of this proof we have obtained a local solution
\begin{equation}
\mathbf{v}^{r^{l-1},\rho,l}\in C^{1,2}\left( [l-1,l]\times M, TM\right),
\end{equation}
represented by a finite family of local functions $v^{r^{l-1},\rho,l,ij},~1\leq i \leq n,~j\in J$ via charts with image $U_j,~j\in J$ -provided that the initial data $\mathbf{v}^{r^{l-1},\rho,l}(l-1,.)=\mathbf{v}^{r,\rho,l-1}(l-1,.)$ are well-defined in $C^{2}\left(M,TM \right)$. At time step $l-1$ the control function $r^{l-1,ij},~1\leq i\leq n,~j\in J$ are known in addition. The control functions $r^{l,ij},~1\leq i\leq n,~j\in J$ at time step $l\geq 1$ are then defined for all $(\tau,x)\in [l-1,l]\times U_j$  by
\begin{equation}
r^{l,ij}(\tau,x)=r^{l-1,ij}(l-1,x)+\delta r^{l,ij}(\tau,x)
\end{equation}
where we want to choose the control functions increments $\delta r^{l,ij},~1\leq i\leq n,~j\in J$ such that the growth is controlled. Since we have local regular solutions, for this purpose of proving boundedness it is sufficient that the controlled velocity functions and the control function have an upper bound $C>0$ which is preserved inductively after finitely many steps. Now for the simple scheme of item i) of section 2 it is indeed not difficult to observe that for a small time step size $\rho>0$ the upper bound $C$ is preserved for the controlled velocity function for each time step. This follows from the definition of the simplified control function increment in item i) of section 2. We have
\begin{equation}\label{deltarrr}
\begin{array}{ll}
\delta r^{l,ij}(l,x):=\int_{l-1}^{l}\int_{U_j}\left( -\frac{v^{r,\rho,l-1,ij}(l-1,y)}{C}\right)C_{U_j}p^{l,ijj}(\tau-(l-1),x-y)dy.
\end{array}
\end{equation}
We mentioned in section 2 that $C_{U_j}$ is a normalisation constant which normalizes the spatial integral of the density $p^{l,ijj}$ to $1$ and is optional.  
Now for small time step size $\rho>0$ the integral in (\ref{deltarrr}) is close to the value $-\frac{v^{r,\rho,l-1,ij}(l-1,x)}{C}$. Especially, we may choose the time step size $\rho>0$ such that
\begin{equation}\label{measureest}
{\big |}\delta r^{l,ij}(l,x){\big |}> \frac{1}{2}\mbox{ if }{\Big |}\frac{v^{r,\rho,l-1,ij}(l-1,x)}{C}{\Big |}\geq \frac{3}{4}.
\end{equation}
Note that we can keep the estimate (\ref{measureest}) as we consider appropriate partitions of unity, but this is clear such that may suppress the additional indices.
Furthermore the local contraction result show that for the modulus of the local increment $\delta v^{r^{l-1},\rho,l,ij}(l,x)$ we have
\begin{equation}
{\big |}\delta v^{r^{l-1},\rho,l,ij}(l,x){\big |}\leq \frac{1}{2}.
\end{equation}

As the modulus of a data value $v^{r,\rho,l-1,ij}(l-1,y)$ becomes  close to $C$, let's say 
\begin{equation}
{\big |}v^{r,\rho,l-1,ij}(l-1,x){\big |}\geq \frac{3}{4}C
\end{equation}
at time $l-1$ we get
\begin{equation}
\begin{array}{ll}
{\big |}v^{r,\rho,l,ij}(l,x){\big |}={\big |}v^{r,\rho,l-1,ij}(l-1,x)+\delta v^{r^{l-1},\rho,l,ij}(l,x)+\delta r^{l,ij}(l,x){\big |}\\
\\
\leq {\big |}v^{r,\rho,l-1,ij}(l-1,x){\big |}\leq C
\end{array}
\end{equation}
A similar reasoning holds for preservation of a upper bound $C$ for multivariate derivatives of order $m=2$ from time $l-1$ to time $l$ if
\begin{equation}\label{preservationprep}
\begin{array}{ll}
\max_{1\leq i\leq n,j\in J}\sum_{0\leq |\alpha|\leq 2}\sup_{x\in  U_j}{\big |}D^{\alpha}_xv^{r,\rho,l-1,ij}(l-1,x){\big |}\leq C
\end{array}
\end{equation}
holds, i.e., we have
\begin{equation}\label{preservation1v}
\begin{array}{ll}
\max_{1\leq i\leq n,j\in J}\sum_{0\leq |\alpha|\leq 2}\sup_{x U_j}{\big |}D^{\alpha}_xv^{r,\rho,l-1,ij}(l-1,x){\big |}\leq C\\
\\
\rightarrow \max_{1\leq i\leq n,j\in J}\sum_{0\leq |\alpha|\leq 2}\sup_{x\in  U_j}{\big |}D^{\alpha}_xv^{r,\rho,l+1,ij}(l,x){\big |}\leq C.
\end{array}
\end{equation}
As we have local existence and regularity results the observation of a preservation of an upper bound inductively from time step (\ref{preservation1v}) suffices in order to conclude later that have a global upper bound $C'\leq C+1$ for all time.
As we have the inductive upper bound $C$ it follows from (\ref{deltarrr}) that we have a linear upper bound for the control function
\begin{equation}\label{rdeltarrr}
\begin{array}{ll}
{\big |}r^{l,ij}(l,x){\big |}\leq l+1,
\end{array}
\end{equation}
and
\begin{equation}\label{rdeltarrr}
\begin{array}{ll}
{\big |}D^{\alpha}_xr^{l,ij}(l,x){\big |}\leq l+1~\mbox{for all }~|\alpha|\leq 2,
\end{array}
\end{equation}  
where we use the assumptions
\begin{equation}
\max_{1\leq i\leq n,j\in J}\sup_{x\in U_j}{\big |}D^{\alpha}_xr^{0,ij}(x){\big |}\leq \max_{1\leq i\leq n,j\in J}\sup_{x\in U_j}{\Big |}D^{\alpha}_x\frac{h^{ij}(x)}{C}{\Big |}\leq 1
\end{equation}
for all $|\alpha|\leq 2$.
Hence we have a global linear upper bound for the functions $v^{\rho,l,ij}=v^{r,\rho,l,ij}-r^{l,ij}$ (linear growth at most with respect to time $l$) and the scheme becomes global.

Next we sharpen this result a bit. 
We fix $j\in J$ and assume that we have the upper bounds
\begin{equation}\label{maxv}
\max_{1\leq i\leq n}\sup_{x\in U_j}{\big |}v^{r,\rho,l-1,ij}(l-1,x){\big |}\leq C,
\end{equation}
and
\begin{equation}\label{maxr}
\max_{1\leq i\leq n}\sup_{x\in U_j}{\big |}r^{l-1,ij}(l-1,x){\big |}\leq C.
\end{equation}
The natural extension is an introduction of a switch which realizes the following idea: keep on going with the simple scheme as long as the control functions $D^{\alpha}_xr^{l-1,ij}(l-1,.),~l\geq 1,~0\leq |\alpha|\leq 2$ have the upper bound $C>0$. However, if such an upper bound does not hold for some $1\leq i\leq n$ and $j\in J$ and some $\alpha$ with $0\leq |\alpha|\leq 2$ then switch to a different definition of a control function increment which ensures that the modulus of the control function decreases during the next time step.  
Let 
\begin{equation}
M^{l-1,\alpha}_r:=\max_{1\leq i\leq n,~j\in J}\sup_{x\in U_j}{\big |}D^{\alpha}_xr^{l,ij}(l-1,x){\big |},
\end{equation}
and consider a property $P$ of the form
\begin{equation}\label{Pcond}
\begin{array}{ll}
\mbox{P}:~M^{l-1,\alpha}_r\leq C~\mbox{or for all $\alpha$ with}~0\leq |\alpha|\leq 2
\end{array}
\end{equation}
Then we simply write $P$ if the condition $\mbox{P}$ in (\ref{Pcond}) holds and $\mbox{non-P}$ if the condition $\mbox{P}$ in (\ref{Pcond}) does not hold. Now the definition of the control function increments in item ii) is 
\begin{equation}
\begin{array}{ll}
\delta r^{l,ij}(\tau,x):=\\
\\
\left\lbrace \begin{array}{ll}
				\int_{l-1}^{\tau}\int_{U_j}\left( -\frac{v^{r,\rho,l-1,ij}(l-1,y)}{C}\right)C_{U_j}p^{l,ijj}(\tau-(l-1),x-y)dy~\mbox{if $\mbox{P}$}\\
				\\
				\int_{l-1}^{\tau}\int_{U_j}\left( -\frac{r^{l-1,ij}(l-1,y)}{C}\right)C_{U_j}p^{l,ijj}(\tau-(l-1),x-y)dy~\mbox{if $\mbox{non-P}$}.
                                \end{array}\right.
\end{array}
\end{equation}
For the source terms involved we use the abbreviations
\begin{equation}
\phi^{v,l,ij}=-\frac{v^{r,\rho,l-1,ij}(l-1,y)}{C},
\end{equation}
and
\begin{equation}
\phi^{r,l,ij}=-\frac{r^{l-1,ij}(l-1,y)}{C}.
\end{equation} 

Now let us observe the growth behavior for two time steps. First assume that the property $P$ holds. For all $1\leq i\leq n$ and all $j\in J$ and $x\in U_j$ we have from time  $l-1$ to time $l$ the growth behavior
\begin{equation}\label{growthfirststep}
\begin{array}{ll}
\delta v^{r,\rho,l,ij}(l,x)=v^{r,\rho,l,ij}(l,x)-v^{r,\rho,l-1,ij}(l-1,x)\\
\\
=v^{r^{l-1},\rho,l,ij}(l,x)-v^{r,\rho,l-1,ij}(l-1,x)+\delta r^{l,ij}(l,x)\\
\\
=\delta v^{r^{l-1},\rho,l,ij}(l,x)+\delta r^{l,ij}(l,x)\\
\\
=\delta v^{r^{l-1},\rho,l,ij}(l,x)+\int_{l-1}^l\int_{U_j}\phi^{v,l,ij}(l-1,y)C_{U_j}p^{l,ijj}(1,x-y)dyds,
\end{array}
\end{equation}
where the latter integrand is independent of $s$, and the whole latter integral
\begin{equation}\label{sourceintv}
\int_{l-1}^l\int_{U_j}\phi^{v,l,ij}(l-1,y)C_{U_j}p^{l,ijj}(1,x-y)dyds
\end{equation}
is close to $\phi^{v,l,ij}(l-1,x)$ as the time step size $\rho>0$ becomes small. We can proceed as before in the case of the simpler scheme of item i) of section 2.

 If on the other hand non-$P$ holds at time $l$ then we have a different control function and get
\begin{equation}\label{growthsecondstep}
\begin{array}{ll}
\delta v^{r,\rho,l+1,ij}(l+1,x)=v^{r,\rho,l+1,ij}(l,x)-v^{r,\rho,l,ij}(l-1,x)\\
\\
=v^{r^{l},\rho,l+1,ij}(l,x)-v^{r,\rho,l,ij}(l-1,x)+\delta r^{l+1,ij}(l+1,x)\\
\\
=\delta v^{r^{l-1},\rho,l+1,ij}(l,x)+\delta r^{l+1,ij}(l+1,x)\\
\\
=\delta v^{r^{l},\rho,l+1,ij}(l,x)+\int_{l}^{l+1}\int_{U_j}\phi^{r,l,ij}(l-1,y)C_{U_j}p^{l+1,ijj}(1,x-y)dyds,
\end{array}
\end{equation}
and
\begin{equation}\label{growthsecondstepr}
\begin{array}{ll}
\delta r^{l+1,ij}(l+1,x)=-\int_{l}^{l+1}\int_{U_j}\phi^{r,l,ij}(l-1,y)C_{U_j}p^{l+1,ijj}(1,x-y)dyds.
\end{array}
\end{equation}
Since non-$P$ holds at $l$ the integrand  $\phi^{r,l,ij}(l-1,y)$ is larger than one for all $y$ where $P$ is violated. However, since $P$ holds at time $l-1$ we have
\begin{equation}
\sup_{x\in U_j}{\big |}r^{l-1,ij}(l-1,x){\big |}\leq C~\mbox{for all }1\leq i\leq n,~j\in J,
\end{equation}
which implies that 
\begin{equation}
\sup_{x\in U_j}{\big |}r^{l,ij}(l,x){\big |}\leq C+1~\mbox{for all }1\leq i\leq n,~j\in J,
\end{equation}
and, according to (\ref{growthsecondstepr}) and for small time step size we get
\begin{equation}
\sup_{x\in U_j}{\big |}r^{l+1,ij}(l+1,x){\big |}\leq C~\mbox{for all }1\leq i\leq n,~j\in J,
\end{equation}
A similar argument holds for multivariate spatial derivatives of order up to $2$ of the control function. 
Hence we get
\begin{equation}\label{preservation1r}
\begin{array}{ll}
\max_{1\leq i\leq n,j\in J}\sum_{0\leq |\alpha|\leq 2}\sup_{x\in U_j}{\big |}D^{\alpha}_xr^{l-1,ij}(l-1,x){\big |}\leq C\\
\\
\rightarrow \max_{1\leq i\leq n,j\in J}\sum_{0\leq |\alpha|\leq 2}\sup_{x U_j}{\big |}D^{\alpha}_xr^{l+1,ij}(\tau,x){\big |}\leq C
\end{array}
\end{equation}
for small time step size even if the property $P$ is violated at time $l$. Furthermore as the property $P$ is satisfied at time $l-1$ we have
\begin{equation}
\sup_{x\in U_j}{\big |}v^{l,\rho,l,ij}(l,x){\big |}\leq C~\mbox{for all }1\leq i\leq n,~j\in J,
\end{equation}
at time $l$ since $P$ is violated at time $l$, i.e., non-$P$ holds at time $l$ we may have no upper bound $C$ at the next time step. This may occur if at some argument $x$ at time $l$ the modulus of the control function becomes larger than $C$, while the modulus of the controlled velocity function is also close to $C$ (at least greater than $C-1$) and at this argument both values have opposite sign (otherwise, if the signs were equal, the control function in the case non-$P$ ensures that the controlled velocity function decreases with the control function from time $l$ to time $l+1$). Well as the property $P$ is assumed to hold at time $l-1$ for small time step size $\rho>0$ we surely have
\begin{equation}
\sup_{x\in U_j}{\big |}v^{l+1,\rho,l,ij}(l+1,x){\big |}\leq C+\frac{1}{2}~\mbox{for all }1\leq i\leq n,~j\in J,
\end{equation}
although the upper bound $C+1$ would suffice for our argument. As the property $P$ holds again at time $l+1$ we have
\begin{equation}
\sup_{x\in U_j}{\big |}v^{l+2,\rho,l,ij}(l+2,x){\big |}\leq C~\mbox{for all }1\leq i\leq n,~j\in J
\end{equation}
by the construction of the control function.
Hence, we have
\begin{equation}\label{preservation1v}
\begin{array}{ll}
\max_{1\leq i\leq n,j\in J}\sum_{0\leq |\alpha|\leq 2}\sup_{x\in  U_j}{\big |}D^{\alpha}_xv^{r,\rho,l-1,ij}(l,x){\big |}\leq C\\
\\
\rightarrow \max_{1\leq i\leq n,j\in J}\sum_{0\leq |\alpha|\leq 2}\sup_{x\in U_j}{\big |}D^{\alpha}_xv^{r,\rho,l+2,ij}(\tau,x){\big |}\leq C,
\end{array}
\end{equation}
and we have established the preservation of the upper bound for the control functions and the controlled velocity functions after mutually two different time steps. It follows that for a constant $C'\leq C+1>0$ independent of the time step number $l$ we have the upper bounds
\begin{equation}
\sup_{l\geq 1}\max_{1\leq i\leq n,j\in J}\sum_{0\leq |\alpha|\leq 2}\sup_{\tau\in [l-1,l],x\in  U_j}{\big |}D^{\alpha}_xv^{r,\rho,l,ij}(\tau,x){\big |}\leq C',
\end{equation}
and 
\begin{equation}
\sup_{l\geq 1}\max_{1\leq i\leq n,j\in J}\sum_{0\leq |\alpha|\leq 2}\sup_{\tau\in [l-1,l],x\in  U_j}{\big |}D^{\alpha}_xvr^{l,ij}(\tau,x){\big |}\leq C'.
\end{equation}
Hence, the scheme is global. It is clear that the estimates can be repeated for higher order derivatives as the local contraction results of step i) and step ii) of this proof hold also for higher order derivatives.

\subsection{step 4: Global existence of classical solutions $\mathbf{v}^{\rho}$ and $\mathbf{v}$}
 Now we have proved that for all $1\leq i\leq n$, all $j\in J$, and for all $l\geq 1$ we have
 \begin{equation}
  r^{l,ij}\in C^{\delta}\left(\left[l-1,l\right]\times U_j  \right),~\mbox{and}~|r^{l,ij}|_{\delta}\leq C
 \end{equation}
and 
\begin{equation}
  v^{r,\rho,l,ij}\in C^{\delta}\left(\left[l-1,l\right]\times U_j  \right) ~\mbox{and}~|v^{r,\rho,l,ij}|_{\delta}\leq C
 \end{equation}
 for a constant $C>0$ which is independent of the time-step number $l$. Indeed we have more regularity with respect to the spatial variables and even with respect to the time-variable $\tau$ (transformed time) we have classical differentiability except at the points $\tau=1,2,\cdots$, i.e., where $\tau$ is a natural number. The non-differentiability in a classical sense with respect to time at these points is due to the fact that the source functions $\phi^{l,ij}$ of the equations for $r^{l,ij}$ are locally constant with respect to the time variable over time $[l-1,l)$ and we have bounded jumps from $\phi^{l-1,ij}$ to $\phi^{l,ij}$ at time $\tau=l-1$ in general. However these source terms appear as a time integral in the representation for $r^{l,ij}$ and for $v^{r,\rho,l,ij}$ (or its first approximation), and this leads to the conclusion that $r^{l,ij}$ and $v^{r,\rho,l,ij}$ are H\"{o}lder continuous across the time points $\tau=l$ for all time step numbers $l\geq 1$. Next, since for both summands in
  \begin{equation}
  v^{\rho,l,ij}=v^{r,\rho,l,ij}-r^{l,ij}.
 \end{equation}
 we have $v^{r,\rho,l,ij},-r^{l,ij}\in C^{\delta}\left(\left[l-1,l\right]\times U_j  \right)$
  we immediately get for all $1\leq i\leq n$, all $j\in J$, and all $l\geq 1$ that 
 \begin{equation}
 v^{\rho,l,ij}\in C^{\delta}\left(\left[l-1,l\right]\times U_j  \right) ~\mbox{and}~|v^{\rho,l,ij}|_{\delta}\leq 2C,
 \end{equation}
 with the same $C>0$ independent of $l\geq 1$.
 Note that for all $l\geq 1$ the local regularity results imply that
$v^{r,\rho,l,ij},-r^{l,ij}\in C^{1,2}\left(\left(l-1,l\right]\times U_j  \right)$.
Hence, for all $1\leq i\leq n$, all $j\in J$, and all $l\geq 1$ w have 
 \begin{equation}
 v^{\rho,l,ij}\in C^{1,2}\left(\left(l-1,l\right]\times U_j  \right) ~\mbox{and}~|v^{\rho,l,ij}|_{\delta}\leq 2C,
 \end{equation}
Hence we have global functions 
\begin{equation}
 v^{\rho,ij}\in C^{\delta}\left(\left[0,\infty\right)\times U_j  \right) ~\mbox{and}~|v^{\rho,ij}|_{\delta}\leq 2C,
\end{equation}
where $v^{\rho,ij}$ is the function which equals $v^{\rho,l,ij}$ if restricted to $[l-1,l)\times U_j$ for all $l\geq 1$ and which solve the incompressible Navier Stokes equation system classically on local domains $[l-1,l)\times U_j$. The next observation from the argument of the preceding 
steps is that the first spatial derivatives of $r^{l,ij}$ and of $v^{r,\rho,l,ij}$ exist continuously for all $l\geq 1$ for all $1\leq i\leq n$ and all $j\in J$. Therefore, we have for all $l\geq 1$, for all $1\leq i\leq n$, all $j\in J$, and all $1\leq k\leq n$
\begin{equation}
 v^{\rho,ij}_{,k}\in 
 C\left(\left[0,\infty\right)\times U_j  \right)
  ~\mbox{and}~|v^{\rho,ijj}|_{C^0\left( \overline{U_j}\right)}\leq 2C.
\end{equation}
This holds for all $j\in J$. Then looking at (\ref{eqiloc1}) we observe that the first order coefficient $v^{kj}$ satisfies 
\begin{equation}
v^{kj}\in 
 C^{\delta}\left(\left[0,\infty\right)\times U_j  \right),
\end{equation}
i.e. the first order coefficients are H\"{o}lder continuous, and this holds also for the intergal terms on the right side (invoking regularity results for Poisson equations). Hence, from classical theory of linear parabolic equations we get for all $1\leq i\leq n$ and all $j\in J$
\begin{equation}
v^{\rho,ij}\in C^{1,2}\left(\left[0,\infty \right)\times U_j  \right).
\end{equation}
It follows immediately that this holds also without time dilatation, i.e., for all $1\leq i\leq n$ and all $j\in J$
\begin{equation}
v^{ij}\in C^{1,2}\left(\left[0,\infty \right)\times U_j  \right),
\end{equation}
where we recall that $v^{ij}(t,.)=v^{\rho,ij}(\tau,.)$ along with $t=\rho\tau$. 
Hence, for the velocity components $v^{ij}$ to $U_j$ we have
\begin{equation}
v^{ij}\in C^{1,2}\left(\left[0,\infty \right)\times U_j  \right)
\end{equation}
for all $1\leq i\leq n$ and all $j\in J$. It follows that
\begin{equation}
\mathbf{v}\in C^{1,2}\left(\left[0,\infty \right)\times M,TM  \right), 
\end{equation}
and our argument is finished.
\end{proof}
 Now assume that
Finally we note that the existence of a classical solution implies that the solution is smooth. Assume that for $\tau\leq l-1$ it has been proved that $\mathbf{v}^{r,\rho,l-1}\in C^{\infty}\left(\left[0,l-1\right] \times M,TM\right)$. We then can extend our proofs of local contraction results in step 1 and step 2 of the proof of the main theorem to function spaces $C^{m,2m}\left(\left[ l-1,l\right]\times U_j \right)$ for any given $m\geq 2$ and repeat the proof with an adapted time size $\rho$ in order to get a global solution $v^{ij}\in C^{m,2m}\left(\left[ 0,\infty\right)\times U_j \right),~1\leq i\leq n,j\in J$. Since this is true for all given $m$ the solution is smooth. Alernatively, as a classical solution is known
we may apply classical regularity theory of linear parabolic equations in order to prove higher regularity. We may just apply a standard theorem of the form
\begin{thm}
Assume that for all multiindices $\alpha$ with $|\alpha|\leq m$ we know that
 \begin{equation}
 D^{\alpha}_xa_{ij},~D^{\alpha}_x b_i,~D^{\alpha}_x c
 \end{equation}
are H\"{o}lder continuous in a domain $D=\left( 0,\infty\right) \times \Omega$ for some domain $\Omega\subset {\mathbb R}^n$. If $u$ is a solution of
\begin{equation}
\frac{\partial u}{\partial t}-\sum_{ij=1}^na_{ij}\frac{\partial^2 u}{\partial x_i\partial x_j}-\sum_{i=1}^nb_i\frac{\partial u}{\partial x_i}-cu=f 
\end{equation}
in $D$, then 
\begin{equation}
D^{\alpha}_xu,~D_tD^{\beta}_xu
\end{equation}
exist for $0\leq |\alpha|\leq m+2$ and $0\leq |\beta|\leq m$ and are all H\"{o}lder continuous in $D$.
\end{thm}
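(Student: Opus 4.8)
The plan is to establish this regularity statement by the classical Schauder bootstrap: one differentiates the equation repeatedly in the spatial variables and feeds the regularity gained at each stage back into the interior parabolic Schauder estimate. Throughout I take for granted, as is implicit in the statement and as holds in the situation of Theorem \ref{mainthm}, that $\left( a_{ij}\right)$ is uniformly elliptic on compact subsets, that $D^{\alpha}_xf$ is H\"{o}lder continuous on $D$ for $|\alpha|\leq m$, and that $u$ already possesses enough a priori regularity (e.g.\ is a classical solution) for the differentiations below to be legitimate. All estimates are interior, i.e.\ on compact subsets $K$ of $D$; since $D=(0,\infty)\times\Omega$ is open in both the time and the space variables, there is no initial or lateral boundary layer to account for.

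For the base case $m=0$ the hypothesis is that $a_{ij},b_i,c,f$ are locally H\"{o}lder continuous, say with exponent $\delta\in(0,1)$, and the interior Schauder estimate for uniformly parabolic equations (as in Friedman, \emph{Partial Differential Equations of Parabolic Type}, Ch.\ 3, or in Ladyzhenskaya--Solonnikov--Ural'tseva) yields at once that $D^{\alpha}_xu$ exists and is H\"{o}lder for $|\alpha|\leq 2$ and that $D_tu$ is H\"{o}lder; this is the assertion for $m=0$.

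For the inductive step assume the theorem holds with $m-1$ in place of $m$, and assume the coefficients and $f$ have H\"{o}lder continuous spatial derivatives up to order $m$. Fix $1\leq k\leq n$ and set $w:=\partial_{x_k}u$. Differentiating the equation in $x_k$ gives
\begin{equation*}
\frac{\partial w}{\partial t}-\sum_{i,j=1}^na_{ij}\frac{\partial^2 w}{\partial x_i\partial x_j}-\sum_{i=1}^nb_i\frac{\partial w}{\partial x_i}-cw=\partial_{x_k}f+\sum_{i,j=1}^n\left( \partial_{x_k}a_{ij}\right) \frac{\partial^2 u}{\partial x_i\partial x_j}+\sum_{i=1}^n\left( \partial_{x_k}b_i\right) \frac{\partial u}{\partial x_i}+\left( \partial_{x_k}c\right) u=:\tilde{f}.
\end{equation*}
By the inductive hypothesis applied to $u$, all $D^{\alpha}_xu$ with $|\alpha|\leq m+1$ are H\"{o}lder, so (via Leibniz) $D^{\alpha}_x\tilde{f}$ is H\"{o}lder for $|\alpha|\leq m-1$: indeed $D^{\alpha}_x\partial_{x_k}f=D^{\alpha+e_k}_xf$ is H\"{o}lder since $|\alpha+e_k|\leq m$, and each term $D^{\beta}_x(\partial_{x_k}a_{ij})\,D^{\alpha-\beta}_x(\partial_{ij}u)$ is H\"{o}lder since $D^{\beta+e_k}_xa_{ij}$ is H\"{o}lder ($|\beta+e_k|\leq m$) and $D^{\alpha-\beta+e_i+e_j}_xu$ is H\"{o}lder ($|\alpha-\beta+e_i+e_j|\leq m+1$), and likewise for the remaining terms. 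The coefficients of the equation for $w$ are the same $a_{ij},b_i,c$ and hence still have H\"{o}lder spatial derivatives up to order $m-1$, so the inductive statement applies to $w$ and gives $D^{\alpha}_xw$ H\"{o}lder for $|\alpha|\leq m+1$ and $D_tD^{\beta}_xw$ H\"{o}lder for $|\beta|\leq m-1$. Since $w=\partial_{x_k}u$ and $1\leq k\leq n$ was arbitrary, $D^{\alpha}_xu$ exists and is H\"{o}lder for $1\leq|\alpha|\leq m+2$, and $u$ itself is H\"{o}lder; this gives the claim for all $|\alpha|\leq m+2$.

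Finally, the mixed time derivatives may also be read off the equation itself: for a multiindex $\gamma$ with $|\gamma|\leq m$, differentiating the equation $\gamma$ times in $x$ expresses $D_tD^{\gamma}_xu$ as a finite sum of products of spatial derivatives of the coefficients of order $\leq m$, of $D^{\gamma}_xf$, and of spatial derivatives $D^{\alpha}_xu$ with $|\alpha|\leq|\gamma|+2\leq m+2$ --- all H\"{o}lder by the previous paragraph --- so $D_tD^{\beta}_xu$ exists and is H\"{o}lder for $|\beta|\leq m$, which completes the induction. The only genuinely delicate ingredient, and the reason the result is invoked here as a "standard theorem" rather than reproved, is the base interior Schauder estimate, whose proof (via potential representations with the fundamental solution and a freezing-of-coefficients argument) is classical and whose a priori regularity requirement on $u$ is met in Theorem \ref{mainthm}; the bootstrap itself presents no further obstacle.
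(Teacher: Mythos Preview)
Your argument is correct and is precisely the classical Schauder bootstrap. However, there is nothing to compare it against: in the paper this theorem is not proved at all. It is explicitly invoked as ``a standard theorem'' from the classical regularity theory of linear parabolic equations and is applied as a black box (inductively in the order of derivatives) to upgrade the $C^{1,2}$ solution of the Navier--Stokes system to a $C^{\infty}$ solution. The paper simply quotes the statement and moves on.

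What you have written is the standard textbook proof of that quoted result (Friedman, Ladyzhenskaya--Solonnikov--Ural'tseva): interior Schauder for the base case, then differentiate in $x_k$, verify that the new right-hand side has H\"{o}lder spatial derivatives of one order less via Leibniz and the inductive hypothesis, and iterate. Your bookkeeping is accurate --- in particular the key point that the inductive hypothesis at level $m-1$ already gives $D^{\alpha}_xu$ H\"{o}lder for $|\alpha|\leq m+1$, which is exactly what is needed to make $\tilde f$ H\"{o}lder to order $m-1$. The closing remark that the mixed derivatives $D_tD^{\gamma}_xu$ can be read off algebraically from the differentiated equation is also correct and slightly cleaner than carrying them through the induction. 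The implicit assumptions you flag (local uniform ellipticity, matching H\"{o}lder regularity of $f$, and enough a priori smoothness of $u$ to justify the differentiations or, equivalently, use of difference quotients) are indeed needed and are satisfied in the application within Theorem~\ref{mainthm}.
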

inductively with the order of derivatives. We have   
\begin{cor} For s smooth Riemannian manifold $M$ a viscosity constant $\nu >0$ and data $\mathbf{h}\in C^{\infty}(M,TM)$ we have
\begin{equation}
\mathbf{v}\in C^{\infty}\left(\left[0,\infty\right)\times M,TM\right)
\end{equation}
\end{cor}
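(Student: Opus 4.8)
The plan is to bootstrap from the $C^{1,2}$-regularity of $\mathbf{v}$ already established in Theorem~\ref{mainthm}, using the standard linear parabolic regularity theorem just quoted, and to run an induction on the order of derivatives. First I would fix a chart $\psi_j:V_j\rightarrow U_j\subset{\mathbb R}^n$ with flat coordinates; in these coordinates the $i$th velocity component $v^{ij}=v^i|_{U_j}$ satisfies the scalar linear parabolic equation in Leray-projected form (compare (\ref{eqiloc1}))
\begin{equation}
\frac{\partial v^{ij}}{\partial t}-\sum_{q,k=1}^na^{f,U}_{qk}\frac{\partial^2 v^{ij}}{\partial x_q\partial x_k}-\sum_{k=1}^nb^{f,U}_k\frac{\partial v^{ij}}{\partial x_k}-\sum_{k=1}^n v^{kj}\frac{\partial v^{ij}}{\partial x_k}=R^{ij},
\end{equation}
where $R^{ij}$ collects the interior and coupling Leray terms, i.e.\ spatial derivatives of solutions of Poisson problems whose right side $S^{f,U}(\mathbf{v},\nabla\mathbf{v})$ is built from $\mathbf{v}$ and $\nabla\mathbf{v}$. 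The coefficients $a^{f,U}_{qk}$ and $b^{f,U}_k$ are smooth and bounded by the hypotheses on $M$ and on $L$, the convection coefficient is $v^{kj}$ itself, and the source $R^{ij}$ is nonlocal in $\mathbf{v}$.

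The inductive step is then the following. Assuming that $\mathbf{v}$ has all spatial derivatives up to order $2m$ and all time derivatives up to order $m$ existing and locally H\"{o}lder continuous on $[0,\infty)\times M$ for some $m\ge 1$ (the base case $m=1$ being Theorem~\ref{mainthm}), I would observe that the convection coefficient $v^{kj}$ is then of class $C^{2m}$ in space and H\"{o}lder in time, and that $R^{ij}$ gains regularity from the inductive hypothesis: by interior elliptic Schauder estimates for the Poisson equation $-\Delta p=S^{f,U}(\mathbf{v},\nabla\mathbf{v})$ the pressure $p$, and hence its gradient $R^{ij}$, is as regular as $S^{f,U}(\mathbf{v},\nabla\mathbf{v})$ plus two spatial derivatives, while $S^{f,U}(\mathbf{v},\nabla\mathbf{v})$ is a polynomial expression in $\mathbf{v}$ and its first and second spatial derivatives, hence in the required H\"{o}lder class. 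Feeding this into the quoted parabolic regularity theorem yields that $v^{ij}$ has spatial derivatives up to order $2(m+1)$ and time derivatives up to order $m+1$, H\"{o}lder continuous on the interior of each local parabolic cylinder. Since $m$ is arbitrary, $v^{ij}\in C^{\infty}$ locally, and since the charts $(V_j)_{j\in J}$ cover the compact manifold $M$ every point of $M$ is interior to some chart, so $\mathbf{v}\in C^{\infty}([0,\infty)\times M,TM)$. Alternatively I would repeat the local contraction arguments of steps~1 and~2 of the proof of Theorem~\ref{mainthm} in the Banach spaces $C^{m,2m}([l-1,l]\times\overline{U_j})$ for each $m\ge 2$, choosing the time step size $\rho=\rho(m)$ small enough, which directly produces a $C^{m,2m}$ solution on each time slab and hence, letting $m\uparrow\infty$, a smooth one.

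The main obstacle, as I see it, is twofold and both parts are essentially of a bookkeeping nature. First, I must verify that the nonlocal Leray and coupling source terms $R^{ij}$ do not lose regularity: this is controlled by interior Schauder estimates for the Poisson operator together with the smoothness of the Green's functions $G^f_{U_j}$ away from the diagonal, and the boundary contributions of the type (\ref{comm}) enter only through $\partial_{\nu_y}G_{U_j}$ integrated against data on $\partial U_j$, which is smooth by the assumed $C^{\infty}$-boundary of $U_j$. Second, I must handle regularity across the time interfaces $\tau=l\in{\mathbb N}$: although the control increments $\delta\mathbf{r}^l$ carry source terms that are only locally constant in $\tau$ with jumps at integers, the uncontrolled velocity $\mathbf{v}=\mathbf{v}^r-\mathbf{r}$ is already a genuine classical solution of (\ref{navm}) on a neighbourhood of each such time slice (this is exactly what step~4 of the proof of Theorem~\ref{mainthm} shows), so interior parabolic smoothing applies across $\tau=l$ and the bootstrap proceeds uniformly on $[0,\infty)$. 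No boundary regularity for $\mathbf{v}$ itself on $\partial U_j$ is needed for the stated conclusion, since the claim concerns smoothness on $M$ and each point of $M$ lies in the interior of some chart.
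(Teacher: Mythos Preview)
Your proposal is correct and matches the paper's own argument essentially line for line: the paper offers exactly the two alternatives you describe, namely (i) applying the quoted linear parabolic regularity theorem inductively in the order of derivatives after freezing $v^{kj}$ as a known first-order coefficient and treating the Leray terms as a source, and (ii) rerunning the contraction estimates of steps~1 and~2 in the spaces $C^{m,2m}\left([l-1,l]\times\overline{U_j}\right)$ with an adapted step size $\rho=\rho(m)$. Your additional remarks on why the nonlocal source $R^{ij}$ inherits regularity via elliptic Schauder estimates and why the time interfaces $\tau=l$ cause no trouble for $\mathbf{v}=\mathbf{v}^r-\mathbf{r}$ are appropriate refinements that the paper leaves implicit.
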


The explanation give here for a global scheme of the incompressible Navier-Stokes equation uses strict and uniform parabolic. It is clear that this assumption cannot be removed completely as it seems very likely that solution of Euler-equations (with viscosity $\nu=0$) may blow up in finite time. We shall give an argument for this elsewhere. However, the method considered here may be extended to systems where the second order coefficient functions satisfy a H\"{o}rmander condition. The reason is that in this case Gaussian density estimates of Stroock-Kusuoka type seem to be sufficient.
Each time step then involves the solution of problems on $[0,\infty)\times M$ of the form
\begin{equation}
	\label{projectiveHoermanderSystemgenx2}
	\left\lbrace \begin{array}{ll}
		\frac{\partial u}{\partial t}=\frac{1}{2}\sum_{i=1}^mV_i^2u+V_0u\\
		\\
		u(0,x)=f(x).
	\end{array}\right.
\end{equation}
for $m$ vector fields which look locally like 
\begin{equation}
V_i=\sum_{j=1}^n v_{ji}(x)\frac{\partial}{\partial x_j},
\end{equation}
where $0\leq i\leq m$. For equation on manifolds the H\"{o}rmander condition has to be rephrased in local charts, of course. However, independence of the chart and well-definiteness is straightforward. Denoting the vector fields $V_i$ in a given chart on an open set $\Omega\subset {\mathbb R}^n$ by $V_i$ again (same name) we may say that the H\"{o}rmander condition is satisfied at $x\in \Omega$ if  
\begin{equation}\label{Hoergenx}
\begin{array}{ll}
H_x:=\mbox{span}{\Big\{} &V_i(x), \left[V_j,V_k \right](x), \\
\\
&\left[ \left[V_j,V_k \right], V_l\right](x),\cdots |1\leq i\leq m,~0\leq j,k,l\cdots \leq m {\Big \}}
\end{array}
\end{equation}
holds. We may say that the H\"{o}rmander condition is satisfied on $\Omega$ if (\ref{Hoergenx}) is satisfied for all $x\in \Omega$.
Note that second order equations of type (\ref{projectiveHoermanderSystemgenx2}) correspond to diffusion processes $X$ which have a local representation on domains $\Omega$ in components, and satisfy stochastic ODEs. In the framework of Malliavin calculus it was proved that 
\begin{thm}
Let the assumption of (\ref{Hoergenx}) be satisfied for all $x\in \Omega$ and let $T>0$. 
Then the law of the diffusion process $X$ (corresponding to the second order equation (\ref{projectiveHoermanderSystemgenx2}) in the Feynman-Kac sense) exists on a domain $\Omega\subseteq {\mathbb R}^n$ is absolutely continuous with respect to the Lebesgue measure, and the density $p$ exists and is smooth, i.e., on a domain $\Omega\subseteq {\mathbb R}^n$ we have
\begin{equation}
\begin{array}{ll}
p:(0,T]\times \Omega\times \Omega\rightarrow {\mathbb R}\in C^{\infty}\left( (0,T]\times \Omega\times \Omega\right). 
\end{array}
\end{equation}
Moreover, for each nonnegative natural number $j$, and multiindices $\alpha,\beta$ there are increasing functions of time
\begin{equation}\label{constAB}
A_{j,\alpha,\beta}, B_{j,\alpha,\beta}:[0,T]\rightarrow {\mathbb R},
\end{equation}
and functions
\begin{equation}\label{constmn}
n_{j,\alpha,\beta}, 
m_{j,\alpha,\beta}:
{\mathbb N}\times {\mathbb N}^d\times {\mathbb N}^d\rightarrow {\mathbb N},
\end{equation}
such that 
\begin{equation}\label{pxest}
{\Bigg |}\frac{\partial^j}{\partial t^j} \frac{\partial^{|\alpha|}}{\partial x^{\alpha}} \frac{\partial^{|\beta|}}{\partial y^{\beta}}p(t,x,y){\Bigg |}\leq \frac{A_{j,\alpha,\beta}(t)(1+x)^{m_{j,\alpha,\beta}}}{t^{n_{j,\alpha,\beta}}}\exp\left(-B_{j,\alpha,\beta}(t)\frac{(x-y)^2}{t}\right) 
\end{equation}
Moreover, all functions (\ref{constAB}) and  (\ref{constmn}) depend on the level of iteration of Lie-bracket iteration at which the H\"{o}rmander condition becomes true.
\end{thm}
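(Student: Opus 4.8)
The plan is to combine Malliavin calculus with the time-rescaling technique of Kusuoka and Stroock. First I would realise the diffusion $X$ as the solution of the Stratonovich stochastic differential equation $dX_t=V_0(X_t)\,dt+\sum_{i=1}^mV_i(X_t)\circ dB^i_t$ on the chart $\Omega$, stopped at the first exit time from $\Omega$ (in our application $M$ is compact, so one works directly on $M$ and no stopping is needed). Standard theory then gives that $X_t\in\mathbb{D}^\infty$ for every $t>0$, and that the first variation process $J_t=\partial X_t/\partial x$ and its inverse $J_t^{-1}$ solve linear matrix equations with coefficients built from the $DV_i$; in particular $J_t$ and $J_t^{-1}$ have moments of all orders, with polynomial-in-$x$ control coming from the (smooth, possibly unbounded on $\Omega$) coefficients. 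The PDE route via H\"ormander's sum-of-squares theorem already yields smoothness of $p$, but it does not produce the Gaussian-type bound, which is why the probabilistic approach is needed.

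The crucial step is the quantitative non-degeneracy of the Malliavin covariance matrix $\gamma_t=J_t\big(\int_0^tJ_s^{-1}\Sigma(X_s)(J_s^{-1})^{\mathsf T}\,ds\big)J_t^{\mathsf T}$ with $\Sigma:=\sum_{i=1}^mV_iV_i^{\mathsf T}$. Here I would invoke Norris's lemma, the quantitative martingale estimate that upgrades smallness of the $L^2$-norm of a continuous semimartingale to smallness of the $L^2$-norms of both its bounded-variation and its martingale parts, and apply it iteratively along the Lie-bracket hierarchy $\{V_i,\ [V_j,V_k],\ [[V_j,V_k],V_l],\dots\}$. If the span condition (\ref{Hoergenx}) first fills $\mathbb{R}^n$ at bracket depth $r$ at the point $x$, this produces, for every $p\geq1$, a constant with $\mathbb{E}\big[(\det\gamma_t)^{-p}\big]\leq C_p\,t^{-pN(r)}$ for small $t$, where $N(r)$ is an explicit increasing function of $r$ — this is exactly where the asserted dependence of the constants on ``the level of iteration at which the H\"ormander condition becomes true'' originates. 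Together with the moment bounds on $J_t,J_t^{-1}$ this gives $(\det\gamma_t)^{-1}\in L^p$ for all $p$.

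Given non-degeneracy, the Malliavin integration-by-parts formula yields, for every multiindex $\alpha$, a weight $H_\alpha(t)\in\bigcap_{p}L^p$ with $\mathbb{E}[(\partial^\alpha f)(X_t)]=\mathbb{E}[f(X_t)\,H_\alpha(t)]$; letting $f$ approximate Dirac masses shows the law of $X_t$ has a density $y\mapsto p(t,x,y)=\mathbb{E}[\delta_y(X_t)]$ that is $C^\infty$ in $(x,y)$, while the forward/backward Kolmogorov equations give the $\partial_t^j$-regularity. For the bound (\ref{pxest}) I would follow Kusuoka--Stroock: rescale by $t\mapsto\varepsilon^2t$ and $B_{\varepsilon^2t}=\varepsilon\widetilde B_t$, so that $X$ at time $\varepsilon^2$ is driven by the rescaled fields $\varepsilon V_i$ and $\varepsilon^2V_0$; tracking how bracket-depth-$r$ spanning survives the rescaling produces the negative power $t^{-n_{j,\alpha,\beta}}$, the polynomial factor $(1+x)^{m_{j,\alpha,\beta}}$ records the at-most-polynomial growth of the coefficients and of the weights $H_\alpha$, and the exponential factor $\exp(-B(x-y)^2/t)$ comes from the exponential-martingale estimate $\mathbb{P}(|X_t-x|\geq\lambda)\leq C\exp(-c\lambda^2/t)$ for small $t$, transported to derivatives of $p$ through the integration-by-parts weights. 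Monotonicity of the time-constants on $[0,T]$ follows by matching the small-$t$ estimate with the uniform bound on $[\tau_0,T]$ and taking suprema; the functions (\ref{constAB}) and (\ref{constmn}) are then read off from the exponent bookkeeping.

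The main obstacle is precisely the quantitative non-degeneracy of $\gamma_t$: the iterated application of Norris's lemma must be organised so that the loss of powers of $t$ is governed exactly by the Lie-bracket depth, and — in the non-compact chart case — so that all estimates survive localisation to $\Omega$, using the exit time and the fact that the density of the killed process is dominated by that of the free one. The remainder is routine moment estimates and scaling-exponent arithmetic.
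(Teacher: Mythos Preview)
Your sketch is a reasonable outline of the Kusuoka--Stroock argument via Malliavin calculus, and in broad strokes it is the correct approach. However, there is nothing to compare against here: the paper does \emph{not} prove this theorem. It is stated immediately after the phrase ``In the framework of Malliavin calculus it was proved that\ldots'' and is quoted as a known result from the literature, with no proof supplied. The theorem functions in the paper only as a citation of the Stroock--Kusuoka density estimates, invoked to suggest that the global scheme developed for the uniformly elliptic case could be extended to operators satisfying a H\"ormander condition.

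So your proposal is not competing with a proof in the paper; it is filling a gap the paper deliberately leaves to external references. If you intend to include a proof, your outline hits the essential ingredients (Norris's lemma iterated along the bracket filtration, moment bounds on the Jacobian and its inverse, integration by parts, and the $t\mapsto\varepsilon^2 t$ rescaling), and your identification of the quantitative non-degeneracy of $\gamma_t$ as the crux is accurate. One point worth tightening: the polynomial factor $(1+x)^{m_{j,\alpha,\beta}}$ in the statement presupposes at-most-polynomial growth of the vector fields and their derivatives, which you invoke but should state as an explicit hypothesis, since the theorem as written imposes only the span condition.
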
 
These density estimates fit in our scheme and may lead to generalisations. Note that polynomial growth factor  $(1+x)^{m_{j,\alpha,\beta}}$ in (\ref{pxest}) is no obstacle since we work on compact manifolds. We considered a natural class of Navier Stokes equations systems in \cite{KHNS}, where the H\"{o}rmander condition is satisfied for the uncoupled second order diffusion part of the operator which is identical for all velocity components. The proof simplifies in the case of compact manifolds, since we do not have to deal with the polynomial decay at infinity and the additional complications related to the additional polynomial growth factor $(1+x)^{m_{j,\alpha,\beta}}$ in the estimate (\ref{pxest}).
  
\footnotetext[1]{
\texttt{{kampen@wias-berlin.de}, {kampen@mathalgorithm.de}}.}

\end{document}